\newif\ifabstr
\newtheorem{theorem}{Theorem}
\newtheorem{proposition}[theorem]{Proposition}
\newtheorem{corollary}[theorem]{Corollary}
\newtheorem{observation}[theorem]{Observation}
\newtheorem{lemma}[theorem]{Lemma}
\newtheorem{claim}{Claim}[theorem]
\theoremstyle{remark}
\newtheorem{remarks}[theorem]{Remarks}
\theoremstyle{definition}
\newtheorem{definition}[theorem]{Definition}
\newcommand{\Z}{\mathbb{Z}}
\newcommand{\G}{{\mathbf G}}
\newcommand{\K}{{\mathbf K}}
\renewcommand{\L}{{\mathbf{L}}}
\renewcommand{\O}{{\mathbf{O}}}
\newcommand{\X}{\mathbf{X}}
\newcommand{\Y}{\mathbf{Y}}
\definecolor{darkgreen}{rgb}{0,.5,0}
\newcounter{sideremark}
\newcommand{\marrow}{\stepcounter{sideremark}\marginpar{$\boldsymbol{\longleftarrow\scriptstyle\arabic{sideremark}}$}}
\newcommand{\michael}[1]{{\color{red}\vskip 5pt\textsf{*** (Michael) \marrow #1\vskip 5pt}}}
\DeclareMathOperator{\lk}{lk}
\DeclareMathOperator{\sd}{sd}
\DeclareMathOperator{\st}{st}
\title{Shellings and sheddings induced by collapses\thanks{This work was
supported by the grant no.~19-04113Y of the Czech Science Foundation
(GA\v{C}R). 
T.M. is supported by grant ANR-17-CE40-0033 of the French National Research
Agency ANR (SoS project).
M.T. is partially supported by Charles University project
UNCE/SCI/004. 
}} 
\author{
Thomas Magnard
\\ \small{Universit\'e Paris-Est, LIGM, CNRS, ENPC, ESIEE Paris, UPEM, Marne-la-Vall\'ee, France}
\and
Michael Skotnica
\\ \small{Department of Applied Mathematics, Charles University, Prague, Czech
Republic}
\and Martin Tancer
\\ \small{Department of Applied Mathematics, Charles University, Prague, Czech
Republic}}
\date{}
\begin{document}

\maketitle



\begin{abstract}
We say that a pure simplicial complex ${\mathbf K}$ of dimension $d$ satisfies the removal-collapsibility condition if ${\mathbf K}$ is either empty or ${\mathbf K}$ becomes collapsible after removing $\tilde \beta_d ({\mathbf K}; {\mathbb Z}_2)$ facets, where $\tilde \beta_d ({\mathbf K}; {\mathbb Z}_2)$ denotes the $d$th reduced Betti number.

In this paper, we show that if the link of each face of a pure simplicial complex ${\mathbf K}$ (including the link of the empty face which is the whole ${\mathbf K}$) satisfy the removal-collapsibility condition, then the second barycentric subdivision of ${\mathbf K}$ is vertex decomposable and in particular shellable. This is a higher dimensional generalization of a result of Hachimori, who proved that if the link of each vertex of a pure 2-dimensional simplicial complex ${\mathbf K}$ is connected, and ${\mathbf K}$ becomes simplicially collapsible after removing $\tilde{\chi}({\mathbf K})$ facets, where $\tilde \chi ({\mathbf K})$ denotes the reduced Euler characteristic, then the second barycentric subdivision of ${\mathbf K}$ is shellable.

For the proof, we introduce a new variant of decomposability of a simplicial complex, stronger than vertex decomposability, which we call \emph{star decomposability}. This notion may be of independent interest.

\end{abstract}


\section{Introduction}
Shellability and collapsibility (to be defined later) are two widely used approaches for
combinatorial decomposition of a simplicial complex. They are similar in
spirit, yet there are important differences among those two notions.
There are shellable complexes homotopy equivalent to a wedge of spheres,
whereas no non-trivial wedge can be collapsible. On the other hand, two
triangles sharing a vertex provide an example of a collapsible complex that
is not shellable. Yet in some important cases, one can relate these two
notions. 

The easy direction is that shellability implies collapsibility whenever the
complex is contractible (in fact, whenever the complex has trivial homology). We will focus here on a more interesting
direction: when collapsibility implies shellability?

In this spirit, Hachimori~\cite{hachimori08} proved that for a pure $2$-dimensional simplicial
complex $\K$, the following statements are equivalent: 
\begin{enumerate}[(i)]
  \item The complex $\K$ has a shellable subdivision.
  \item The second barycentric subdivision $\sd^2 \K$ is shellable.
  \item The link of each vertex of $\K$ is connected and $\K$ becomes
    collapsible after removing $\tilde \chi (\K)$ facets where $\tilde \chi$
    denotes the reduced Euler characteristic.
\end{enumerate}



As Hachimori points out, one cannot expect that such an equivalence would be
achievable in higher dimensions. Namely, the implication (i)~$\Rightarrow$~(ii)
cannot hold in higher dimensions due to the examples by
Lickorish~\cite{lickorish91}.
However, we will show that it is possible to
generalize the interesting implication (iii)~$\Rightarrow$~(ii). The
equivalence of (iii) and (ii) was one of the important steps in a recent proof
of NP-hardness of recognition of shellable
complexes~\cite{goaoc-patak-patakova-tancer-wagner19}. Though the hardness
reduction requires the implication only in dimension 2, we find it interesting
to provide a higher-dimensional generalization. For example, the computational
complexity status of recognition of shellable/collapsible 3-spheres is unknown
and the implication (iii)~$\Rightarrow$~(ii) could provide a link between the
two notions.


For explaining our generalization, we briefly introduce some notions (see also
Section~\ref{s:prelim} for the notions undefined here). 

\paragraph{Collapsibility.}
Let $\K$ be a simplicial complex
and $\sigma \in \K$ be a face which is contained in only one face $\tau
\in K$ with $\sigma \subsetneq \tau$. (Necessarily $\dim \tau = \dim \sigma +
1$ and $\tau$ is a \emph{facet} $\K$, that is, an inclusion-wise maximal face of $\K$). In this case,
we say that $\sigma$ is a \emph{free} face of $\K$ and we also say that a
complex $\K'$ \emph{arises from $\K$ by an elementary collapse} if there are
$\sigma$ and $\tau$ as above such that $\K' = \K \setminus \{\sigma, \tau\}$,
we denote this by $\K \searrow \K'$. A complex $\K$ is \emph{collapsible}, if
there is a sequence $(\K_1, \dots, \K_r)$ of complexes such that $\K_1 = \K$,
$\K_r$ is a point, and $\K_1 \searrow \K_2 \searrow \cdots \searrow \K_r$. An
important property of collapsibility is that the elementary collapses preserve
the homotopy type, a fortiori, the homology groups.


\paragraph{Shellability.}
Let $\K$ be a simplicial complex of dimension $k$. 
A total order $F_1, \ldots, F_t$ of facets of $\K$ 
is called a \emph{shelling} if 
$F_i \cap \bigcup_{j=1}^{i-1} F_j$
is a pure
$(k-1)$-dimensional complex. (Purely formally, we consider the facets in
the formula $F_i \cap \bigcup_{j=1}^{i-1} F_j$ above as subcomplexes of $\K$.) 
$\K$ is then said to be \emph{shellable} if it admits a shelling order.
For comparison with collapsibility, we will also use the \emph{reverse
shelling order} $F_t, \ldots, F_1$.


\paragraph{Removal-collapsibility condition.}
We will say that a pure complex $\K$ satisfies the
\emph{removal-collapsibility} condition, abbreviated to (RC) condition, if $\K$
is either empty or $\K$
becomes collapsible after removing some number of facets. We remark that if
$\dim \K = d$ the number of removed facets can be easily computed as $\tilde
\beta_d (\K; \Z_2)$ where $\tilde \beta_d (\K; \Z_2)$ denotes the reduced $d$th Betti number,
i.e., the rank of the reduced homology group $\tilde H_d(\K;
\Z_2)$.\footnote{The choice of coefficients $\Z_2$ is not very important here.
We could choose an arbitrary field.} Indeed, by a routine application of the
Mayer-Vietoris exact sequence, removing a facet either decreases $\tilde
\beta_d (\K; \Z_2)$ by one or increases $\tilde \beta_{d-1} (\K; \Z_2)$ by one. But we cannot
afford the latter case if the complex becomes collapsible after removing some
number of facets. In addition, the lower dimensional homology remains
unaffected when removing a facet (directly from the definition of simplicial
homology or again by a Mayer-Vietoris exact sequence), therefore a complex
satisfying (RC) condition also satisfies $\tilde \beta_i (\K; \Z_2) = 0$ for
$0 \leq i \leq d-1$. In particular, $\tilde \chi(K) = (-1)^d \tilde \beta_d(\K;
\Z_2)$.

We also observe that if $d = 1$, that is, if $\K$ is a graph, then the (RC) condition is equivalent with
stating that $\K$ is connected. Also, every $0$-complex satisfies the (RC)
condition.

Altogether, Hachimori's condition (iii) for $2$-complexes is equivalent to
saying that the link of the empty face (i. e., $\K$) and the link of every
vertex satisfies the (RC) condition. This is furthermore equivalent with saying
that the link of every face of $\K$ satisfies the (RC) condition as links of
dimension at most $0$ always satisfy the (RC) condition. We say that $\K$
satisfies the \emph{hereditary removal-collapsibility} condition, abbreviated
to (HRC) condition, if the link of every face of $\K$ satisfies the (RC)
condition. In particular, (HRC) is equivalent to Hachimori's condition (iii) for
$2$-complexes. This condition is hereditary in the following sense: If $\K$
satisfies (HRC) and $\sigma \in \K$, then the link $\lk (\sigma, \K)$ also
satisfies (HRC). Indeed, the link of $\sigma'$ in $\lk (\sigma, \K)$ is just
the 
link of $\sigma \cup \sigma'$ in $\K$.\footnote{Note that we do not claim that
(HRC) is hereditary with respect to subcomplexes or induced subcomplexes.} 

We establish the following generalization of Hachimori's implication (iii)
$\Rightarrow$ (ii).

\begin{theorem}
\label{t:main}
Let $\K$ be a pure simplicial $d$-complex satisfying the (HRC) condition, then the second barycentric subdivision $\sd^2 \K$ is
shellable. 
\end{theorem}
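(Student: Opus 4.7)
The plan is to prove the theorem by strong induction on the dimension $d$, but with a strengthened conclusion: instead of showing that $\sd^2 \K$ is merely shellable, I would show that $\sd^2 \K$ is \emph{star decomposable} (a refinement of vertex decomposability promised in the abstract), which implies vertex decomposable and hence shellable. The reinforcement is essential because in the induction step we repeatedly attach subdivided ``pieces'' onto a previously constructed partial decomposition, and mere shellability of each piece in isolation is not a condition that glues well along complicated boundary patterns.

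The base case $d=0$ is immediate since $\sd^2 \K = \K$ is a finite nonempty set of points. For the inductive step, I would unfold the (HRC) condition into an explicit combinatorial sequence: let $F_1, \ldots, F_m$ be the facets such that $\K_0 := \K \setminus \{F_1, \ldots, F_m\}$ is collapsible, and fix a collapsing sequence $\K_0 \searrow \K_1 \searrow \cdots \searrow \K_r = \pointc$. The strategy is to build a star decomposition of $\sd^2 \K$ by traversing this sequence in reverse, processing first the uncollapses (from $\K_r$ up to $\K_0$) and then the attachments of $F_1, \ldots, F_m$.

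For the uncollapse stage, an elementary step $\K_{j} \to \K_{j-1}$ adds a free pair $(\sigma, \tau)$ and in $\sd^2 \K$ corresponds to attaching the second subdivision of the ``collapse cell'' $\tau$ along a subdivided disk in $\partial \tau$. Because $\lk(\sigma, \K_{j-1})$ inherits (HRC) from $\K$ (which is the key hereditary feature recorded in the excerpt), the inductive hypothesis applied to it produces the star decomposition of the attachment region, compatibly with the existing boundary. This is essentially a relative version of Bing's observation that a barycentric subdivision provides enough room to turn collapses into shellings. For the facet-attachment stage, adding $\sd^2 F_i$ glues a $d$-ball onto the current complex along a subcomplex $\L_i \subset \sd^2 \partial F_i$. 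One inductively decomposes $\L_i$ first, then extends across $\sd^2 F_i$; the induction hypothesis now applies to the $(d-1)$-complex $\partial F_i \cap (\K_0 \cup F_1 \cup \cdots \cup F_{i-1})$, which is pure and whose links are links inside $\K$, hence satisfy (HRC) as well.

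The main obstacle, and where I expect the bulk of the work to be, is the facet-attachment stage: the intersection $\L_i$ need not be a ball, so one has to star-decompose a $(d{-}1)$-subcomplex of $\partial F_i$ together with its complement inside $\sd^2 F_i$ in a compatible way. This is precisely what star decomposability is tailored to handle, and I would formalize it via an auxiliary technical lemma of the form: ``for any simplex $F$ and any subcomplex $\L \subseteq \partial F$ whose $\sd^2$ is star decomposable, the pair $(\sd^2 F, \sd^2 \L)$ admits a relative star decomposition.'' Establishing this lemma, and checking that star decomposability behaves correctly under links, attachments, and taking unions along prescribed boundaries, is the real engine driving the induction; once it is in place, the global construction is bookkeeping on the sequence of uncollapses followed by facet attachments.
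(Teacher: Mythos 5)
Your overall skeleton---strengthen the conclusion to a star-decomposability-type property, induct on dimension, and organize the decomposition by the removed facets together with the collapsing sequence, using that (HRC) passes to links---is the same as the paper's. But your plan misplaces the difficulty and omits the ideas that actually make the induction close. The facet-attachment stage is the easy case, not the hard one: deleting the facets $F_1,\dots,F_m$ keeps all of their proper faces, so each $\partial F_i$ lies entirely in $\K_0$ and the region $\L_i$ along which $\sd^2 F_i$ meets the previously built complex is \emph{all} of $\sd^2\partial F_i$ (a sphere, not some complicated subcomplex); the only input needed is that $\sd^2\partial F_i$ is decomposable, which follows from the induction once one checks separately that the boundary of a simplex satisfies (HRC) (Observation~\ref{o:boundary}). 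Your justification for this step---that the links of $\partial F_i\cap(\K_0\cup F_1\cup\dots\cup F_{i-1})$ ``are links inside $\K$, hence satisfy (HRC)''---is incorrect as stated: links in a subcomplex are not links in $\K$, and (HRC) is explicitly not hereditary under passing to subcomplexes.

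More seriously, the engine of the proof is missing. Since $\lk(\omega,\sd\K)\cong\sd\partial\omega*\sd\lk(\omega,\K)$ for a face $\omega$ of $\K$ (Lemma~\ref{l:lk_sd}), the inductive step is not your proposed relative lemma about pairs $(\sd^2 F,\sd^2\L)$ with $\L\subseteq\partial F$ arbitrary, but rather statements about merging decomposition orders of $\sd\X$ and $\sd\Y$ into one for $\sd(\X*\Y)$ that also respects a prescribed overlap; this is why the paper works with pairs and with the finer notion of star decomposability \emph{in vertices}, whereas plain star decomposability of each piece does not record the intermediate-stage data your gluing needs. The genuinely delicate case is the maximal face $\tau_j$ of an elementary collapse: there the overlap with the remainder is $\partial\tau_j$ minus the open star of $\sigma_j$, and the needed join statement (Proposition~\ref{p:join_nice}) holds only under a homological hypothesis---the overlap star must have trivial reduced homology---which must then be propagated through the induction by a Mayer--Vietoris argument (Lemma~\ref{l:homology_property}). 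One also needs to rearrange a collapsing sequence so that it ends at a prescribed vertex (Observation~\ref{o:rearrange}) to handle the free face $\sigma_j$. None of these ingredients appears in your plan; the ``bookkeeping'' you defer is precisely where they are required, and without them the gluing at the collapse steps does not go through.
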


We suspect that the reverse implication does not hold but we are not
aware of a concrete complex violating the reverse implication. Possibly
interesting examples could be the non-collapsible triangulations of the $3$-ball
$B_{15,66}$ and $B_{17,95}$ constructed by Benedetti and
Lutz~\cite{benedetti-lutz13} but we do not know if their second barycentric
subdivisions are shellable.

For the proof of Theorem~\ref{t:main}, we will define two coarser notions than shellability called
\emph{star decomposability} and \emph{star decomposability in vertices}, which
may be of independent interest. Together with \emph{vertex decomposability} of
Provan and Billera~\cite{provan-billera80} we will establish the following
chain of implications, where the last implication is a result of Provan and
Billera. 

\medskip

star decomposable in vertices $\Rightarrow$ star decomposable $\Rightarrow$
vertex decomposable $\Rightarrow$ shellable

\medskip

Therefore, for a proof of Theorem~\ref{t:main} it is sufficient to prove the 
following generalization (together with the first two promised implications).

\begin{theorem}
\label{t:star_decomposable}
Let $\K$ be a pure simplicial $d$-complex satisfying the (HRC) condition, then the second barycentric subdivision $\sd^2 \K$ is star
decomposable in vertices. 
\end{theorem}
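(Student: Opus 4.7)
The plan is to prove Theorem~\ref{t:star_decomposable} by induction on the dimension $d$ of $\K$. The base cases are $d=0$ (where $\K$ is a disjoint set of points, $\sd^2 \K = \K$, and star decomposability in vertices holds trivially) and, as a sanity check, $d=1$ (where (HRC) just says that $\K$ is a connected graph and an easy direct argument handles $\sd^2 \K$). The inductive hypothesis is that $\sd^2 \L$ is star decomposable in vertices for every pure complex $\L$ of dimension less than $d$ satisfying (HRC). This hypothesis will be applied to the links of vertices processed in the decomposition; its applicability is supported by the heredity of (HRC) along links already established in the excerpt.

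Given $\K$ of dimension $d$, the (RC) condition applied to $\K$ itself supplies a set $\mathcal{R} \subset \K$ of $\tilde\beta_d(\K;\Z_2)$ facets such that $\K' := \K \setminus \mathcal{R}$ is collapsible. Fix once and for all a collapsing sequence of $\K'$: a list of pairs $(\sigma_i,\tau_i)_{i=1,\ldots,m}$ with $\tau_i$ a facet of the complex at step $i$ and $\sigma_i \subsetneq \tau_i$ a free face at that step. The key idea is to use this collapsing sequence (together with an arbitrary ordering of $\mathcal{R}$) to guide the construction of a star decomposition in vertices of $\sd^2 \K$.

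The concrete construction processes facets of $\K$ in a prescribed order. In the dismantling view of star decomposability, I would remove the stars (in $\sd^2 \K$) of the barycenters $\hat\tau_1, \hat\tau_2, \ldots, \hat\tau_m$ in the forward collapsing order of $\K'$, and finally the barycenters $\hat F$ of the facets $F \in \mathcal{R}$ in an arbitrary order. For each such shedding vertex $v$, two things must be verified: first, that the link of $v$ in the current complex is combinatorially identifiable as $\sd^2 \M_v$ for some pure complex $\M_v$ of dimension less than $d$; second, that $\M_v$ satisfies (HRC). The identification of the link will use the standard link-of-a-barycenter formula in barycentric subdivisions together with the collapsing data $(\sigma_i, \tau_i)$, so that $\M_v$ is naturally a join of the boundary of a simplex (coming from $\sigma_i \subset \tau_i$) with $\lk(\tau_i, \K)$. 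The (HRC) property of $\M_v$ should then follow from the (HRC) property of $\K$ together with the easy facts that (HRC) is preserved under joining with a boundary of a simplex and is hereditary along links.

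The main obstacle, and the place where the double subdivision is essential, is ensuring that the residual complex after each star removal is again of the form $\sd^2 \M$ for a pure $d$-dimensional $\M$, so that the inductive machinery can continue and so that the \emph{next} shedding vertex indeed has the required structure. This is the analogue of Hachimori's key observation in dimension two, promoted to arbitrary dimension: single subdivision is not enough (as Lickorish's examples witness), but double subdivision provides the combinatorial flexibility to perform the star removals cleanly along a collapsing sequence. A second subtlety is the handling of the facets in $\mathcal{R}$ in the final phase: each such $F$ must be star-decomposed in vertices as a single subdivided simplex, attached to the already-built-up part along the correct subcomplex of its boundary; one must check that the preceding phase has produced exactly the right attaching complex. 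Formalising these two steps, and in particular isolating a clean lemma on how a collapse pair $(\sigma,\tau)$ in $\K$ translates into the removal of the star of $\hat\tau$ in $\sd^2 \K$, is where I expect the bulk of the technical work.
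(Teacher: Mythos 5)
Your high-level instinct (induct on dimension, let the removal-plus-collapse data of $\K$ dictate an order of barycenters, identify links via the join formula, use heredity of (HRC)) matches the paper, but the order you propose is not a valid candidate for Definition~\ref{d:sdv}, for two separate reasons. First, star decomposability in vertices of $\sd^2\K=\sd(\sd\K)$ requires a total order on \emph{all} of $V(\sd\K)$, i.e.\ on \emph{every} nonempty face of $\K$, and indeed the stars of facet barycenters alone do not even cover $\sd^2\K$: the vertex of $\sd^2\K$ corresponding to a free face $\sigma_i$ lies in the star of no $\hat\tau_j$ or $\hat F$, since $\{\{\sigma_i\},\{\tau_j\}\}$ is a face of $\sd(\sd\K)$ only when $\{\sigma_i\}$ and $\{\tau_j\}$ are comparable in $\sd\K$, which they are not. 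Every face of $\K$ occurs in your data as a removed facet, a $\sigma_i$, a $\tau_i$, or the terminal vertex of the collapse, and the correct order uses all of them, interleaving $\sigma_i$ immediately before $\tau_i$. Second, putting the removed facets $\mathcal{R}$ \emph{last} breaks the link condition: for $\phi\in\mathcal{R}$ that is not the overall last vertex (unavoidable once $|\mathcal{R}|\geq 2$), the only faces of $\K$ comparable with $\phi$ are its proper subfaces, all of which are handled in the collapse block and hence precede $\phi$ in your order; so $V(\lk(\phi,\sd\K))_{\succ\phi}=\emptyset$, and by the order condition a pair with empty second entry and $(d-1)$-dimensional first entry ($\lk(\phi,\sd\K)\cong\sd\,\partial\phi$) cannot be star decomposable in vertices. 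The facets carrying $\tilde\beta_d$ must be shed \emph{first} (equivalently, shelled on last), which is exactly how the paper orders them, and then their ``later neighbours'' are all their proper subfaces.

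The invariant you flag as the main obstacle --- that each residual complex is again $\sd^2\M$ for a pure $d$-dimensional (HRC) complex $\M$ --- is neither maintained in the paper nor true: already for $\K$ a single edge, your first step removes the star of the edge's barycenter and leaves two disjoint edges, which is not a double barycentric subdivision of anything. The paper never inspects global residues; all verifications are local and rest on machinery your sketch has no counterpart for. Concretely: Lemmas~\ref{l:lk_sd} and~\ref{l:links_in_link} identify the pair to be checked at a face $\omega$ as $\bigl(\sd(\sd\,\partial\omega * \sd\lk(\omega,\K)),\, W\bigr)$ with $W$ read off from the order, and the three cases (removed facet, free face $\sigma_j$, coface $\tau_j$) are settled by the join results: Corollary~\ref{c:join_pairs} for the first two, and Proposition~\ref{p:join_nice} for the $\tau_j$-case, whose homology hypothesis is precisely the contractibility of $\partial\tau_j$ minus the open star of $\sigma_j$ and whose proof needs a Mayer--Vietoris argument (Lemma~\ref{l:homology_property}), all fed into Theorem~\ref{t:sd_vd} to reach vertex decomposability. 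Finally, your inductive hypothesis is too weak: in the $\sigma_j$-case one must know that $\sd^2\lk(\sigma_j,\K)$ admits a star decomposition in vertices \emph{ending at the prescribed vertex} $\tau_j\setminus\sigma_j$, which requires the strengthened statement that the order induced by \emph{any} removal-and-collapse sequence works, together with Observation~\ref{o:rearrange} to rearrange the collapses of the link so that the chosen vertex comes last. Without these ingredients the plan, as written, does not go through.
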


\paragraph{Additional motivation and background.}
Both notions, collapsibility and shellability, play an important role in
PL topology because they may help to determine not only the homotopy type of a given
collapsible/shellable space but sometimes even the (PL) homeomorphism type. For
example, a collapsible PL manifold is a ball, and a shelling of a PL-manifold
(if it does not change the homotopy type) preserves the homeomorphism
type~\cite{rourke-sanderson82}.

A relation between collapsibility or shellability of some subdivision of a
complex and of some barycentric subdivision has been studied by Adiprasito and
Benedetti~\cite{adiprasito_benedetti17}. Namely, they show that a simplicial
complex is PL homeomorphic to a shellable complex if and only if it is
shellable after finitely many barycentric subdivisions,\footnote{The result is
stated in terms of derived subdivisions but there is no difference on
the combinatorial level.} and they show an analogous result for collapsibility. If
we were interested only in shellability of some barycentric subdivision of $\K$
in Theorem~\ref{t:main}, it is possible that the proof could be easier, because
it would be possible to use arbitrary suitable subdivisions in the intermediate
steps. 


Hachimori's implication (iii)$\Rightarrow$(ii), as well as its generalization,
Theorem~\ref{t:main}, can be understood as a tool for showing that a concrete
complex is shellable. A lot of effort has been devoted to developing 
such tools in various contexts; see e.g.~\cite{bjorner-wachs83,kozlov97}. The
advantage of Theorem~\ref{t:main} could be that the (HRC) condition may
naturally follow from the topological/combinatorial properties of a considered problem as
it is in the case of the application of Hachimori's result
in~\cite{goaoc-patak-patakova-tancer-wagner19}. A possible disadvantage could
be that we have to allow some flexibility on the target complex (it has to be
the second barycentric subdivision of another complex).

An additional piece of motivation may come from commutative algebra. 
For
example, Herzog and Takayama~\cite{herzog-takayama02} found out that if $\K$
is a complex (not necessarily pure) and $I_\K$ is the Stanley-Reisner ideal
corresponding to $\K$, then $I_\K$ has linear quotients if and only the
Alexander dual $\K^*$ is shellable (in the non-pure sense, but the pure case is a
special case, of course). Thus, Theorem~\ref{t:main} may serve as a
tool showing that certain Stanley-Reisner ideals have linear quotients.

Finally, the notions of star decomposability and star decomposability in
vertices that we introduce along the way may be of independent interest as
inductive tools similar to collapsibility, shellability,
vertex-decomposability, etc. Although their definitions are slightly technical,
they appear very naturally in our context, as we sketch in the proof strategy below.
It would also be interesting to know whether these notions admit some counterpart
in terms of commutative algebra (similarly to the Herzog-Takayama equivalence
above).

\paragraph{Proof strategy.}
Here we first sketch Hachimori's proof (iii) $\Rightarrow$ (ii), in our words
though. Then we sketch the necessary steps for upgrading the proof to higher
dimensions. 

Let $\K$ be a pure $2$-complex satisfying the conditions of (iii). We want to
sketch a strategy how to shell $\sd^2 \K$. For simplicity of pictures, we will
assume that $\K$ is already collapsible (as we want to avoid the non-trivial
second homology in the pictures).

The second barycentric subdivision $\sd^2 \K$ is covered by stars of vertices
of $\sd^2 \K$ which correspond to original faces of $\K$; see
Figure~\ref{f:star_cover}. The stars may overlap, but they overlap only in
their boundaries (in links). Now, let us consider an elementary collapse $\K
\searrow \K'$ while removing a free face $\sigma$ and a maximal face $\tau$
containing $\sigma$. Naturally, in $\sd^2 \K$ we want to emulate this by a reverse
shelling removing the triangles first in $\st(\sigma, \sd^2 \K)$ and then in
$\st(\tau, \sd^2 \K)$;\footnote{Formally speaking,
$\st(\sigma, \sd^2 \K)$ stands for $\st(\{\{\sigma\}\}, \sd^2 \K)$, etc.; see our
convention in the preliminaries.} see Figure~\ref{f:star_cover_shell}.
This is indeed a good strategy as Hachimori~\cite{hachimori08}
showed. However, this quite heavily depends on the fact that the dimension of the
complex is $2$ as the structure of $\sd^2 \K$ is so simple that all steps are
obvious. 

\begin{figure}
\begin{center}
  \includegraphics[page=1]{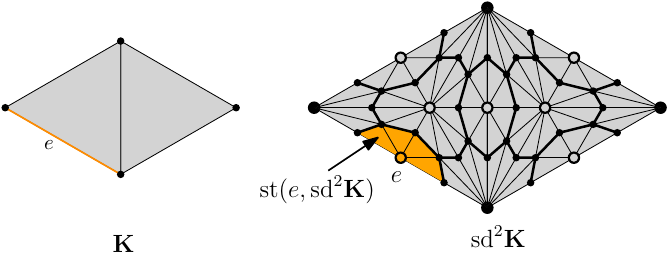}
\end{center}
\caption{Decomposition of $\sd^2 \K$ into stars. For example, an edge $e$ of
$\K$ becomes a vertex in $\sd^2 \K$. Consequently, its star in $\sd^2 \K$ is
one of the stars in the decomposition.}
\label{f:star_cover}
\end{figure}

\begin{figure}
\begin{center}
  \includegraphics[page=2]{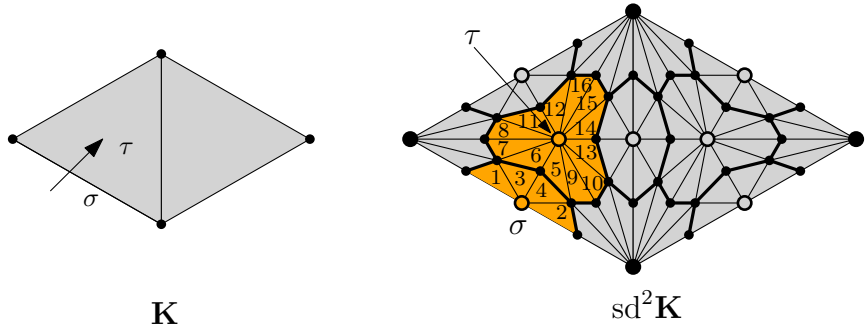}
\end{center}
\caption{Reverse shelling of $\sd^2 \K$ following an elementary collapse of
  $\K$. The numbers in triangles indicate a valid order of removing triangles.}
\label{f:star_cover_shell}
\end{figure}

In general dimension we want to proceed similarly. However it
seems out of reach to describe directly the order of removals of facets of
$\sd^2 \K$ and check that this is a shelling order due to a complicated
structure of $\sd^2 \K$. At least we initially tried this approach but we
quickly got lost in addressing too many cases. Therefore, we instead use
the aid of some coarser notions.


The first helpful notion is vertex decomposability of Provan and
Billera~\cite{provan-billera80}. 
A simplicial $d$-complex $\K$ is \emph{vertex decomposable} if it is pure and
\begin{itemize}
  \item $\K$ is a $d$-simplex, or
  \item there is a vertex $v \in V(\K)$ such that $\K - v$ is
    $d$-dimensional vertex decomposable (where $\K - v$ denotes the complex
    obtained by removing $v$ and all faces containing $v$ from $\K$)
    and $\lk(v, \K)$ is $(d-1)$-dimensional vertex decomposable.
\end{itemize}
This recursive definition induces an order $v_1, \ldots, v_{n-(d+1)}$ of
$n - (d+1)$ vertices of $\K$ according to the sequence of vertex removals in
the second item (where $n$ is the number of vertices of $\K$). This order is called a \emph{shedding order} and we
artificially extend any shedding order to all vertices of $\K$ so that the
remaining vertices follow in arbitrary order. (Intuitively, as soon as we reach
a $d$-simplex in the first item, we allow removing vertices in arbitrary order.)

Proving that $\sd^2 \K$ is vertex
decomposable is stronger than showing that $\sd^2 \K$ is shellable, and it also seems easier to specify the shedding order
as we deal with a smaller number of objects. For example, in case of the collapse from
Figure~\ref{f:star_cover_shell}, we specify the order only on three vertices;
see Figure~\ref{f:star_cover_vertex}.

\begin{figure}
\begin{center}
  \includegraphics[page=3]{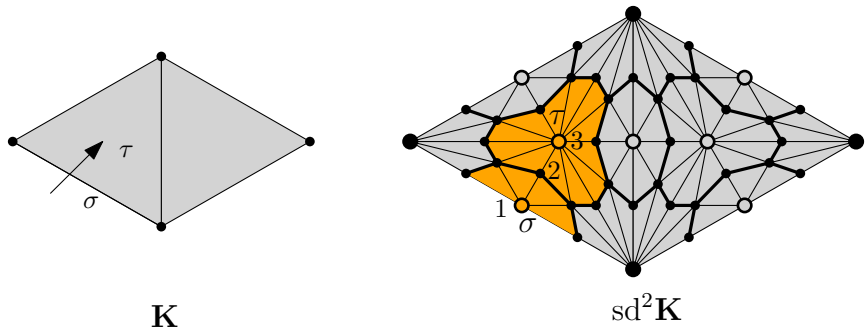}
\end{center}
\caption{Vertex decomposition (shedding) of $\sd^2 \K$ following an elementary collapse of
  $\K$. In this case, we first remove $\sigma$, then the vertex in between of
$\sigma$ and $\tau$ and finally $\tau$.}
\label{f:star_cover_vertex}
\end{figure}

On the other hand, it is even easier to start removing the closed stars of vertices
(and then taking a closure to get again a simplicial complex). In case of
Figure~\ref{f:star_cover_vertex}, we would first remove the closed star of
$\sigma$ in $\sd^2 \K$. Subsequently, when taking the closure, we reintroduce the full link of
$\sigma$. Thus in this case, our first step coincides with removing $\sigma$
(and therefore the open star of $\sigma$). The second step is, however, more
interesting (see Figure~\ref{f:star_cover_overlap}): First we
remove the closed star of $\tau$. Then, when taking the closure, we
do not reintroduce the vertex in between of $\sigma$ and $\tau$.
Therefore, this second step removes simultaneously two vertices.

\begin{figure}
\begin{center}
  \includegraphics[page=4]{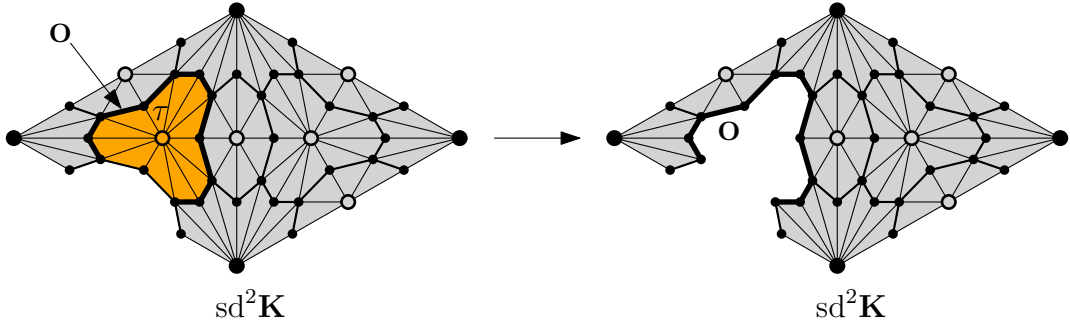}
\end{center}
\caption{Overlap of the link of $\tau$ and the rest of the complex.}
\label{f:star_cover_overlap}
\end{figure}

This will be our notion of star decomposability; however, one of the key steps
in our approach is to identify an appropriate property of order of removals as above,
which implies vertex decomposability of our complex. 
For sketching the idea, let us again consider the case of removing the closed star of 
$\tau$ in the second step above. Similarly as in the case of vertex decomposability, we will need
that the link of the center of the removed star (in this case the link of
$\tau$) is star decomposable. However, this is not the only condition that we
require. Let $\O$ be the overlap of the link of $\tau$ and the remainder of the
complex after removing the star of $\tau$ (see
Figure~\ref{f:star_cover_overlap}). We will actually need a star decomposition
of the link of $\tau$ such that $\O$ is an intermediate step in this
decomposition. Overall, this additional condition ensures a well working
induction for deducing vertex decomposability. We postpone the precise
definition of star decomposability to Section~\ref{s:star_decomposability}.

Finally, we will utilize the fact that we are interested in star
decomposability of the complex $\sd^2 \K$ which is a barycentric subdivision of
another complex, namely $\sd^2 \K = \sd \L$ where $\L = \sd \K$. We will
introduce the notion of star decomposability in vertices which will mean that
we are removing only stars centered in vertices of $\sd \L$ which are
simultaneously vertices of $\L$ as in Figure~\ref{f:star_cover_overlap}. (Note
that vertices of $\L$ are faces of $\K$.) This brings one more advantage. We
will essentially need claims of the following spirit: 
If $\sd(\X)$ and $\sd(\Y)$ are star decomposable in vertices, then $\sd(\X *
\Y)$ is star decomposable in vertices as well (here $\X * \Y$ denotes the join
of $\X$ and $\Y$). In addition, we will also need to describe the order of the
star decomposition in vertices of $\sd(\X * \Y)$. Though it is probable that
analogous claims are valid also for star decomposability, vertex
decomposability and/or shellability, the notion of star decomposability in
vertices removes at least one layer of complications in the proof: It is just
sufficient to describe the order of the decomposition of $\sd(\X * \Y)$ as some
total order on $V(\X * \Y) = V(\X) \sqcup V(\Y)$ via a suitable way of
interlacing the total orders on $V(\X)$ and $V(\Y)$ (here $V(\X) \sqcup V(\Y)$
denotes the disjoint union of $V(\X)$ and $V(\Y)$).

\section{Preliminaries}
\label{s:prelim}
%

In this section, we briefly overview the standard terminology regarding
simplicial complexes, including some of the notions mentioned in the
introduction without the definition. We generally assume that the reader is familiar with
simplicial complexes. Thus the main purpose is to set up the notation. 


We work with finite \emph{abstract simplicial complexes}, that is, finite set systems $\K$
such that if $\sigma \in \K$ and $\sigma^\prime \subseteq \sigma$, then
$\sigma^\prime \in \K$. Elements of $\K$ are \emph{faces}; a $k$-face is a face
of \emph{dimension} $k$, that is, a face of size $k+1$. Vertices correspond to
$0$-faces of $\K$ (specifically, a vertex $v$ corresponds to a $0$-face
$\{v\}$); and the set of vertices is denoted $V(\K)$.
The dimension of $\K$
is the dimension of the largest face (or $-\infty$, if $\K$ is empty). 
The complex $\K$ is \emph{pure} if all
inclusion-wise maximal faces have the same dimension. 





A \emph{join} of two simplicial complexes $\K_1$ and $\K_2$ is the 
complex $\K_1 * \K_2 := \{\sigma_1 \sqcup \sigma_2\colon \sigma_1 \in \K_1, \sigma_2
\in \K_2\}$ where $\sqcup$ stands for disjoint union.\footnote{We can perform
the disjoint union of two sets $A$ and $B$ even if $A$ and $B$ are not disjoint.
The standard model in such case is to take $A \times \{1\} \cup B \times
\{2\}$.} In our inductive arguments, we will carefully distinguish the empty
complex $\emptyset$ and the complex $\{\emptyset\}$ 
containing a single face, which is $\emptyset$. Note that $\K * \emptyset =
\emptyset$, whereas $\K * \{\emptyset\} = \K$. 

Given a face $\sigma$ of $\K$, the \emph{link} of $\sigma$ in $\K$ is defined
as
$\lk(\sigma, \K) := \{\sigma^\prime \setminus \sigma\colon \sigma^\prime \in \K,
\sigma \subseteq \sigma^\prime\}$. 
The (closed) \emph{star} of $\sigma$ in $\K$
is defined as 
$\st(\sigma, \K) := \{\sigma' \in \K\colon \sigma' \cup \sigma \in \K\}$. 


The \emph{barycentric subdivision} of a simplicial complex $\K$ is the  simplicial complex
\begin{align*}
\sd \K := \{\{\sigma_1, \ldots, \sigma_n\}\colon \sigma_1, \ldots, \sigma_n \in \K,\
\emptyset \neq \sigma_1 \subsetneq \sigma_2 \subsetneq \cdots \subsetneq \sigma_n\}.
\end{align*}

The geometric idea behind the definition of barycentric subdivision is the
following: According to the definition, the vertices of $\sd \K$ are nonempty
faces of $\K$. Place a vertex of $\sd \K$ into the barycenter of the face it
represents in $\K$ (in the geometric realization of $\K$, which we did not define
here). Then $\sd \K$ represents a (geometric) subdivision of $\K$; see
Figure~\ref{f:single_barycentric}. (In the subsequent text, we will not need any details about
geometric realization of the barycentric subdivision. However, we will use this
geometric interpretation in motivating pictures.)


\begin{figure}
\begin{center}
\includegraphics{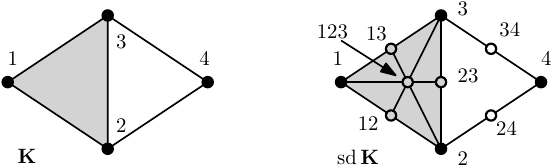}
  \caption{The barycentric subdivision $\sd \K$ of a complex $\K$. The notation
  on the right picture is simplified so that $12$ stands for $\{1,2\}$, etc.}
\label{f:single_barycentric}
\end{center}
\end{figure}

Note also that if $v$ is a vertex of $\K$, then $\{v\}$ is a vertex of $\sd
\K$. If there is no risk of confusion, we write $v$ instead of $\{v\}$ in
formulas such as $\lk(v , \sd \K)$.  We apply similar conventions to the second
barycentric subdivision, so we write $\lk(v, \sd^2 \K)$ instead of the cumbersome
$\lk(\{\{v\}\}, \sd^2 \K)$, or $\lk(\sigma, \sd^2 \K)$ instead of
$\lk(\{\sigma\}, \sd^2 \K)$ if $\sigma$ is a face of $\K$. 



\section{Star decomposability} \label{s:star_decomposability}

Given a simplicial complex $\X$ and a set $W \subseteq V(\X)$, we say that $W$
\emph{induces a star partition} of $\X$ if 
\begin{enumerate}[(i)]
\item $\X = \bigcup_{w \in W} \st(w, \X)$, and
\item any two distinct vertices $w_1, w_2 \in W$ are not neighbors in $\X$.
\end{enumerate}
An example of a set inducing a star partition is the set $\{w_1, w_2, w_3, w_4\}$
in Figure~\ref{f:sd}.

Now, let us assume that $W$ induces a star partition. Given a total order $\prec$ on $W$, $W' \subseteq W$, and $w \in W$, we set
$W'_{\succ w} := \{w' \in W'\colon w' \succ w\}$ and 
$W'_{\succeq w} := \{w' \in W'\colon w' \succeq w\}$. We will also use the notation
$$\st(W', \X) := \bigcup_{w' \in W'} \st(w', \X)$$
for an arbitrary subset $W'$ of
$V(\X)$. Furthermore given $x \in W$ and a set $W' \subseteq W$, we
define\footnote{The symbol $\O$ in the notation stands for the `overlap' of
$\lk(x, \X)$ and $\st(W', \X)$.}
\begin{equation}
\label{e:overlap}
\O(x, W') := \lk(x, \X) \cap \st(W', \X) = \lk(x, \X) \cap \bigcup_{w' \in W'}
\st(w', \X) = \lk(x, \X) \cap \bigcup_{w' \in W'} \lk(w', \X).
\end{equation}
See Figure~\ref{f:sd}. Note that this is the overlap mentioned in the
introduction. Occasionally, if we need to emphasize dependency on $\X$, we write $\O_{\X}(x,
W')$.


Now, we are ready to introduce star decomposability. Following the sketch in
the introduction, we want to introduce star decomposability of a simplicial
complex $\X$. However, in order to formulate all conditions correctly, we need
to state this definition for pairs.


\begin{definition}[Star decomposability]
\label{d:sd}
Let $(\X, X)$ be a pair where $\X$ is a simplicial complex which is pure and
  $k$-dimensional, $k \geq -1$ (that is, $\X \neq \emptyset$), and $X \subseteq
  V(\X)$. 
We inductively define \emph{star decomposability} of the pair $(\X, X)$. We also say that
$\X$ is \emph{star decomposable} if there is $X \subseteq V(\X)$ for which the pair
$(\X, X)$ is star decomposable.

For $k = -1$, the pair $(\{\emptyset\},\emptyset)$ is star decomposable.


If $k \geq 0$, then $(\X, X)$ is star decomposable, if there is a set $W \neq
\emptyset$
inducing a star partition and a total order $\prec$ on $W$ with the following
properties. 

\begin{description}
  \item[Order condition:] $X = W_{\succeq w'}$ for some
    $w^\prime \in W$.
  \item[Link condition:] For any vertex $w \in W$ except for the last vertex in
    the order $\prec$, there is a nonempty set $U = U(w) \subseteq V(\lk(w, \X))$ such that
   $\st(U,\lk(w, \X)) =   \O(w, W_{\succ w})$ and 
   the pair $(\lk(w, \X),U)$ is star decomposable.
  \item[Last vertex condition:] For the last vertex $\hat{w} \in W$ in the order
   $\prec$, the link $\lk(\hat{w}, \X)$ is star decomposable. 
\end{description}
If the order $\prec$ on $W$ satisfies the three conditions above, we say that
$\prec$ induces a \emph{star decomposition} of $(\X, X)$.

See Figure~\ref{f:sd} for an example.
\end{definition}

\begin{figure}
  \centering
  \includegraphics[page=1]{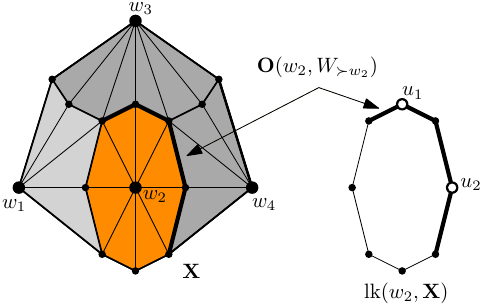}
  \caption{An example of the~star decomposition induced by the~set $W = \{w_1, w_2, w_3, w_4\}$
    with the~order $w_1 \prec w_2 \prec w_3 \prec w_4$ (left) and an example of the set
    $U(w_2) = \{u_1, u_2\}$ such that $\st(U(w_2),\lk(w, \X)) =
    \O(w_2, W_{\succ w_2})$ and the pair $(\lk(w_2, \X),U(w_2))$
   is star decomposable (right).}\label{f:sd}
\end{figure}

\newpage
\begin{remarks}
\label{r:sd}
\noindent
\begin{enumerate}[(i)]
\item Observe that the order condition implies $X \neq \emptyset$ if $k \geq 0$.
\item
In the definition above, we remark that if $\X$ is $k$-dimensional and pure,
    for $k \geq 0$, then for any $w \in V(\X)$, the link $\lk(w,\X)$ is
    $(k-1)$-dimensional and pure. Therefore, in the
    last two conditions, we indeed refer to star decomposability of a pure
    complex of
    smaller dimension.

    In addition, for any $W' \subseteq V(\X)$, $W' \neq \emptyset$, $\st(W', \X)$ is $k$-dimensional
    and pure. In particular, when replacing $\X$ with $\lk(w, \X)$, we get that
    $\O(w, W_{\succ w}) = \st(U, \lk(w, \X))$ is $(k-1)$-dimensional and pure.


%


\item If $k = 0$, then every pair $(\X, X)$ is star decomposable if and only
  if  $X \neq \emptyset$. Indeed, the only if part follows from (i). For the `if' part, we observe that we can set $W = V(\X)$ and we can use
    any order $\prec$ on $W$ such that $X = W_{\succeq w'}$ for some $w'$. 
  Both the link condition and the last vertex condition refer to star
  decomposability of $(\{\emptyset\}, \emptyset)$, which we assume.
    

  \item If $k=1$, then it is not difficult to show that $\X$ is star
    decomposable if and only if $\X$ is a connected bipartite graph. Note that
    requiring that $\X$ is connected is a must as we want to get that star
    decomposability implies vertex decomposability. Here is the place where the
    possibly slightly mysterious property `$X \neq \emptyset$ if $k \geq 0$' comes into
    the play. Indeed, this property and the link condition achieve that the overlap $\O(w, W_{\prec
    w})$ is nonempty, thus $\X$ must be a connected graph.
%
     

%
%
\end{enumerate}
\end{remarks}

\section{Star decomposability implies vertex decomposability.}


In this section, we want to describe how star decomposability implies vertex
decomposability. 
%
We start with a simple (folklore) lemma verifying that some order is a
shedding order (with respect to our convention that we extend the shedding
order also to the vertices of the last simplex). Given a simplicial complex $\X$, a
total (or partial) order $\prec$ on $V(\X)$, and $v \in V(\X)$, by $\X_{\succ v}$ we denote
the subcomplex of $\X$ induced by vertices that are greater than $v$. Similarly
$\X_{\succeq v}$ is induced by $v$ and the vertices that are greater than $v$.

\begin{lemma}
\label{l:is_shedding}
Let $\X$ be a pure $k$-dimensional simplicial complex, $k \geq 0$. Let $\prec$
be a total order on $V(\X)$. Then $\prec$ is a shedding order if and only if for every
vertex $v$ except for the last $k+1$ vertices, the link $\lk(v, \X_{\succeq v})$ is
vertex decomposable and $(k-1)$-dimensional, and $\X_{\succ v}$ is pure
$k$-dimensional. 
\end{lemma}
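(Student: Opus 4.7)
The plan is to induct on $n = |V(\X)|$, peeling off the $\prec$-minimum vertex $v_1$ at each step. The key bookkeeping observation is that $\X_{\succeq v_1} = \X$, while for every $i \geq 2$ the induced subcomplexes $\X_{\succeq v_i}$ and $\X_{\succ v_i}$ do not contain $v_1$, so $(\X - v_1)_{\succeq v_i} = \X_{\succeq v_i}$ and $(\X - v_1)_{\succ v_i} = \X_{\succ v_i}$; consequently all the link- and purity conditions attached to the vertices $v_2, \dots, v_n$ agree verbatim between $\X$ and $\X - v_1$. Moreover, the ``last $k+1$ vertices'' of $\X - v_1$ in the restricted order are precisely the last $k+1$ vertices of $\X$ in $\prec$, since $(n-1)-(k+1) = n-(k+1)-1$. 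In the base case $n = k+1$, purity forces $\X$ to be a $k$-simplex, every total order on $V(\X)$ is trivially a shedding order by our convention, and the range of vertices to test in the lemma is empty, so both sides hold vacuously.

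For the inductive step assume $n > k+1$. In the forward direction, since $\X$ is not a $k$-simplex, the recursive definition of vertex decomposability must choose a shedding vertex at the first step, and by the meaning of ``$\prec$ is a shedding order'' this vertex must be $v_1$; hence $\X - v_1 = \X_{\succ v_1}$ is vertex decomposable and $k$-dimensional (so pure $k$-dimensional), and $\lk(v_1, \X) = \lk(v_1, \X_{\succeq v_1})$ is vertex decomposable and $(k-1)$-dimensional, which is exactly the stated condition at $v_1$. The restriction of $\prec$ to $V(\X - v_1)$ is a shedding order of $\X - v_1$, so by the induction hypothesis the stated conditions hold for the vertices $v_2, \dots, v_{n-k-1}$ inside $\X - v_1$, and the alignment observation transports them verbatim to $\X$.

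In the backward direction, the hypothesis at $v_1$ directly supplies both ingredients the recursive definition needs to shed $v_1$ first from $\X$: the link $\lk(v_1, \X)$ is vertex decomposable and $(k-1)$-dimensional, and $\X - v_1$ is pure $k$-dimensional. The alignment observation shows that the hypotheses of the lemma for $v_2, \dots, v_{n-k-1}$ are exactly the hypotheses for $\X - v_1$ with the restricted order, so induction yields that this restricted order is a shedding order of $\X - v_1$; in particular $\X - v_1$ is vertex decomposable of dimension $k$. Prepending $v_1$ therefore produces a shedding order of $\X$ that agrees with $\prec$ (the arbitrary tail on the final $k+1$ vertices is compatible with $\prec$ since every order on a $k$-simplex is a shedding order). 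I do not expect any substantive obstacle; the entire argument is bookkeeping around the alignment of $\X$ with $\X - v_1$ on indices $i \geq 2$, the base case $n = k+1$, and the minor identification $\lk(v_1, \X_{\succeq v_1}) = \lk(v_1, \X)$ which lines up the $i=1$ instance of the lemma's link condition with the link condition at $v_1$ in the definition of vertex decomposability.
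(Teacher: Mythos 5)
Your proof is correct and follows essentially the same route as the paper: induction on the number of vertices with base case the $k$-simplex, peeling off the $\prec$-first vertex, noting that the lemma's conditions at $v_1$ are exactly what the definition of vertex decomposability requires there, and that the conditions at the remaining vertices coincide verbatim for $\X$ and $\X - v_1$ under the restricted order. The paper is terser (it treats the ``only if'' direction as immediate and leaves the alignment bookkeeping implicit), but there is no substantive difference.
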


\begin{proof}
The `only if' part of the statement follows immediately from the definition of
vertex decomposability and the shedding order, thus we focus on the `if' part.

If $\X$ has $k+1$ vertices, then $\X$ is a $k$-simplex and we are done.
Otherwise, we proceed by induction on the number of vertices of $\K$.

Let $v_1$ be the first vertex in the order $\prec$. Then we need to check that
$\lk(v_1, \X_{\succeq v_1})$ is vertex decomposable and $(k-1)$-dimensional, which is
part of the assumptions. We also need to check that $\X - v_1 = \X_{\succ v_1}$ is vertex
decomposable and $k$-dimensional. Again, $k$-dimensional is part of the
assumptions, thus, it remains to check that $\X - v_1$ is vertex decomposable.
However, this follows from the induction applied to $\X_{\succ v_1}$ and $\prec$
restricted to $V(\X) \setminus \{v_1\}$. 
\end{proof}

Now, let $\X$ be a star decomposable simplicial complex, let $W$ be a subset of $V(\X)$ which induces a star
partition of $\X$ and let $\prec$ be a total order which induces a star decomposition
of $\X$. We will define a suitable partial order $\prec'$ on
$V(\X)$ extending $\prec$ such that the desired shedding order in the vertex
decomposition of $\X$ will follow $\prec'$. 

For arbitrary $v \in V(\X)$, let $p(v)$ be the last vertex in the
$\prec$ order among the vertices $w \in W$ such that $v \in \st(w, \X)$. 
In particular $p(w) = w$ for any $w \in W$.  
If we want to emphasize $\prec$, we write $p(v, \prec)$ (which will be used in
a single but important case of the proof of Theorem~\ref{t:sd_vd}). 
Now, we define $\prec'$ in the following way.  We set $v \prec' v'$ if $p(v)
\prec p(v')$ for $v, v' \in V(\X)$. In addition, we set $v \prec' w$ if $p(v) =
w$ and $v \neq w$. Finally, if $p(v) = p(v')$ and $v, v' \not\in W$, then $v$ and $v'$ are
incomparable in $\prec'$. We say that $\prec'$ is \emph{derived from} $\prec$.
An example of this order is given in Figure~\ref{f:Ps} where $P(w) = \{v \in
V(\X)\colon v \neq w, \ p(v) = w\}$ for $w \in W$; the elements in $P(w)$ are
incomparable.

\begin{figure}
\begin{center}
  \includegraphics[page=2]{sd+link}
  \caption{The set $P(w)$ and the auxiliary order $\prec'$ for the star
  decomposition in Figure~\ref{f:sd}.}  \label{f:Ps}
\end{center}
\end{figure}
We will often need that $\st(W_{\succ w}, \X)$ is an induced subcomplex of
$\X$ for $w \in W \setminus\{\hat w\}$:

\begin{lemma}
  \label{l:star_induced}
  Let $\X$ be a star decomposable complex, let $W$ be a subset of $V(\X)$ which
  induces a star
  partition of $\X$ and let $\prec$ be a total order on $W$ which induces a star
  decomposition
  of $\X$. Let $\prec'$ be the partial order on $V(\X)$ derived from $\prec$ and let $w \in W$ be different from
  the last vertex $\hat w$. Then $\st(W_{\succ w}, \X)$ is the induced
  subcomplex $\X_{\succ' w}$ of $\X$.
\end{lemma}

\begin{proof}
If $\dim \X = -1$, then the statement is void. If $\dim \X = 0$, the assertion
  easily follows from Remark~\ref{r:sd}(iii). Thus, we may assume $\dim \X \geq
  1$, which we will implicitly when referring to the link condition.

  Recall that $\st(W_{\succ w}, \X) =  \bigcup_{w^+ \in W_{\succ w}} \st(w^+,
  \X)$. It is easy to check the inclusion $\X_{\succ' w} \supseteq \bigcup_{w^+
  \in W_{\succ w}} \st(w^+,  \X)$ because $\st(w^+, \X) \subseteq \X_{\succ' w}$ for every $w^+ \succ w$. (Note that if $v$ is a
  neighbor of $w^+$ in $\X$, then $p(v) \succeq w^+ \succ w$. Thus, $v$ belongs
  to $V(\X_{\succ' w})$.) Therefore, it remains to show $\X_{\succ' w}
  \subseteq  \bigcup_{w^+ \in W_{\succ w}} \st(w^+, \X_{\succ' w})$.


  Let $\sigma \in \X_{\succ' w}$. For contradiction, let us assume that $\sigma
  \not\in \st(w^+, \X)$ for all $w^+ \succ w$. (In particular, $\sigma \neq
  \emptyset$.) Let $w^- \preceq w$ be the
  largest vertex in $W$ (according to the total order $\prec$) such that $\sigma \in \st(w^-, \X)$. 
  Such $w^-$ must exist because $W$ induces a star partition of $\X$. In
  addition, because $\sigma \in \X_{\succ' w}$ and $w^- \preceq w$, we get that
  $w^- \not \in \sigma$. Thus, $\sigma \in \lk(w^-, \X)$.

  Now, we use that $\X$ is star decomposable. Namely, we use the link
  condition for $w^-$. There is $U \subseteq V(\lk(w,\X))$ such that $\st(U,
  \lk(w^-,\X)) = \O(w^-, W_{\succ w^-})$ and the pair $(\lk(w^-, \X), U)$ is
  star decomposable. By the order condition for this pair, there is a set $Z
  \neq \emptyset$ inducing a star partition of $\lk(w^-, \X)$ and a total order
  $\lhd$ on $Z$ such that $U = Z_{\unrhd z'}$ for some $z' \in Z$.
  Because $\sigma \in \lk(w^-, \X)$ 
  and $Z$ induces a star partition of $\lk(w^-, \X)$ some
  vertex $v$ of $\sigma$ has to belong to $Z$. If $v \in U$, then $\sigma \in
  \st(U, \lk(w^-,\X)) = \O(w^-, W_{\succ w^-})$ which contradicts the fact that
  $w^-$ is the largest vertex such that $\sigma \in \st(w^-, \X)$. If $v \in Z
  \setminus U$, then $p(v) = w^-$ which contradicts $\sigma \in \X_{\succ' w}$.
\end{proof}

Now, we are ready to state and prove that star decomposability implies vertex
decomposability. As the reader may expect, the order $\prec'$ appears in the statement 
to allow a well working induction.

\begin{theorem}
\label{t:sd_vd}
 Let $\X$ be a star decomposable simplicial complex; 
 let $W$ be a subset of $V(\X)$ which induces a star
partition of $\X$; and let $\prec$ be a total order which induces a star decomposition
  of $\X$. Let $\prec'$ be the partial order on $V(\X)$ derived from $\prec$. 
  Then $\X$ is vertex decomposable in a shedding order extending
  $\prec'$.
\end{theorem}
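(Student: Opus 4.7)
The plan is to induct on $\dim \X = k$, the cases $k \le 0$ being trivial, and to rely on an auxiliary \emph{overlap lemma} proven by a parallel induction on dimension: for any star-decomposable pair $(\X, X)$ with data $W, \prec$ and any $w_j \in W$, the induced subcomplex $\X|_{V(\st(W_{\succ w_j}, \X))}$ coincides with $\st(W_{\succ w_j}, \X)$. Its proof is by contradiction; if $\sigma \in \X$ had $V(\sigma) \subseteq V(\st(W_{\succ w_j}, \X))$ but $\sigma \notin \st(W_{\succ w_j}, \X)$, pick the maximal $i$ with $\sigma \in \st(w_i, \X)$, which satisfies $i \le j$. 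The star-partition condition forces $w_i \notin \sigma$, so $\sigma \in \lk(w_i, \X)$ and $V(\sigma) \subseteq V(\O(w_i, W_{\succ w_i})) = V(\st(U(w_i), \lk(w_i, \X)))$; applying the overlap lemma inductively inside $(\lk(w_i, \X), U(w_i))$ yields $\sigma \in \st(U(w_i), \lk(w_i, \X)) = \O(w_i, W_{\succ w_i}) \subseteq \st(W_{\succ w_i}, \X)$, contradicting the maximality of $i$.

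Next I construct $\prec''$ block by block. By the inductive hypothesis, each link $\lk(w_j, \X)$ is vertex-decomposable in a shedding $\pi_j$ extending its own partial order. Crucially, for $j < m$ the pair $(\lk(w_j, \X), U(w_j))$ is star decomposable with $U(w_j)$ a suffix of its star-partition set, so $\pi_j$ processes all vertices of $V(\lk(w_j, \X)) \setminus V(\O(w_j, W_{\succ w_j}))$ before touching the overlap vertices $V(\O)$. Inside the block $V_j = \{v : p(v) = w_j\}$ I order the non-$W$ vertices by the corresponding initial segment of $\pi_j$ (or by the full $\pi_m$ when $j = m$), then place $w_j$ at the end of the block, with blocks ordered by $\prec$ on $W$; this $\prec''$ extends $\prec'$ by construction.

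For the shedding verification via Lemma~\ref{l:is_shedding}, consider any $v$ outside the last $k+1$ positions of $\prec''$. When $v = w_j$ with $j < m$, the overlap lemma identifies $\X_{\succ w_j}$ with $\st(W_{\succ w_j}, \X)$, which is pure $k$-dimensional by Remark~\ref{r:sd}(ii), and a direct computation plus the overlap lemma applied inside $\lk(w_j, \X)$ identifies $\lk(w_j, \X_{\succeq w_j})$ with $\st(U(w_j), \lk(w_j, \X))$, which is vertex-decomposable as the tail of $\pi_j$ past its non-overlap prefix. When $v$ is a non-$W$ vertex of block $V_j$, $\X_{\succeq v}$ splits as $\st(W_{\succ w_j}, \X) \cup (\{w_j\} * \L_v)$ glued along the overlap, where $\L_v$ is the link $\lk(w_j, \X)$ with the $\pi_j$-predecessors of $v$ removed; then $\lk(v, \X_{\succeq v}) = \{w_j\} * \lk(v, \L_v)$ is a cone over a vertex-decomposable complex (since $v$ is the first vertex in the tail of $\pi_j$), hence vertex-decomposable.

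The main obstacle I anticipate is purity of $\X_{\succ v}$ at these intermediate non-$W$ vertices, which would fail if $v$ fell among the last $k$ positions of $\pi_j$. This is ruled out by the bound $|V(\O(w_j, W_{\succ w_j}))| \ge k$: the overlap $\O$ is nonempty (since $U(w_j) \neq \emptyset$) and pure $(k-1)$-dimensional by Remark~\ref{r:sd}(ii), so any single facet of it already supplies $k$ vertices; hence the non-overlap portion of $\pi_j$ has length at most $|V(\lk(w_j, \X))| - k$, keeping every such $v$ safely outside the terminal simplex of $\pi_j$, so that $\L_v - v$ is pure $(k-1)$-dimensional and $\{w_j\} * (\L_v - v)$ contributes only $k$-dimensional facets to $\X_{\succ v}$.
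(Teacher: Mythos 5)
Your proposal is correct and follows essentially the same route as the paper's proof: the same block-by-block construction of $\prec''$ from sheddings of the links $\lk(w_j,\X)$ that exhaust $P(w_j)$ before the overlap, the same verification via Lemma~\ref{l:is_shedding} with the case split $v\in W$ versus $v\notin W$, the same cone/join description of $\lk(v,\X_{\succeq'' v})$, and the same dimension count using that the overlap $\O(w_j,W_{\succ w_j})$ is pure $(k-1)$-dimensional and hence has at least $k$ vertices. Your explicit ``overlap lemma'' is a welcome addition that rigorously justifies the identifications $\X_{\succ'' w_j}=\st(W_{\succ w_j},\X)$ and the splitting of $\X_{\succeq'' v}$, which the paper only asserts; just note that for the last block $j=m$ the exclusion of $v$ from the final $k$ positions of $\pi_m$ comes from the global hypothesis that $v$ is not among the last $k+1$ vertices of $\prec''$ (as in the paper), not from the overlap bound, which is vacuous there.
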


\begin{proof}
  We prove the statement by induction on $k$, the dimension of $\X$. If $k = -1$, the complex
  $\{\emptyset\}$ is vertex decomposable according to the definition of vertex
  decomposability (it is regarded as a $-1$-simplex). Although it could be covered
  by the second induction step, we can observe that the case $k = 0$ is also easy
  as any order of removing vertices from a $0$-complex is a shedding order.

 Now, let us prove the theorem for some $k \geq 1$ assuming that it is valid
 for lower values.

 We first describe a total order $\prec''$ on $V(\X)$ extending $\prec'$. Then
 we verify that $\prec''$ is a shedding order. Recall that for $w \in W$, $P(w)$ is the
 set of vertices $v \in V(\X)$ such that $p(v) = w$ but $v \neq w$; see
  Figure~\ref{f:Ps}.
 To describe $\prec''$ 
 it remains to describe $\prec''$ on each $P(w)$ separately. We distinguish
 whether $w$ is the last vertex in $\prec$.


 If $w = \hat w$ is the last vertex, then $P(\hat w) = V(\lk(\hat w, \X))$. By the last vertex
 condition (for star decomposability) $\lk(\hat w, \X)$ is star decomposable,
 therefore vertex decomposable by induction as well. We set $\prec''$ on
 $P(\hat w)$ as an arbitrary shedding order of $\lk(\hat w, \X)$.

 If $w$ is not the last vertex, then $P(w) = V(\lk(w, \X)) \setminus V(\O(w,
 W_{\succ w}))$. By the link condition, the pair $(\lk(w, \X), U)$ is star
 decomposable where $U \subseteq V(\lk(w,\X))$ satisfies $\st(U, \lk (w, \X)) =
 \O(w,  W_{\succ w})$. 
  
  \begin{claim}
 \label{c:inductive_shedding}
   Let $w \in W$ be different from the last vertex $\hat w$. Then the link $\lk(w, \X)$ is vertex decomposable in
     some shedding order $\lhd''$ that starts on $P(w) = V(\lk(w, \X)) \setminus V(\O(w,
      W_{\succ w}))$ and then continues on $V(\O(w, W_{\succ w}))$.
  \end{claim}
 
  \begin{proof}
    Consider a set $Z \subseteq V(\lk(w,\X))$ inducing a star partition of
    $\lk(w,\X)$ and a total order $\lhd$ on $Z$ witnessing that the pair
    $(\lk(w, \X), U)$ is star decomposable. In particular, $U = Z_{\unrhd z'}$
    for some $z' \in Z$ by the order condition.
    Let $\lhd'$ be the partial
    order on $V(\lk(w, \X))$ derived from~$\lhd$. 
    By induction, $\lk(w, \X)$ is
      vertex decomposable in a shedding order $\lhd''$ extending $\lhd'$.

    In addition, by the link condition (on star decomposable $\X$) we get
    $$\O(w,  W_{\succ w}) = \st(U, \lk (w, \X)) = \st(Z_{\unrhd z'}, \lk (w,
    \X)).$$
    The vertices of $\st(Z_{\unrhd z'}, \lk (w, \X))$ are exactly the
    vertices of $\lk(w, \X)$ with $p(v, \lhd) \in Z_{\unrhd z'}$.
    Therefore all vertices in $V(\lk(w, \X)) \setminus V(\O(w,
	  W_{\succ w}))$ precede the vertices in $V(\O(w, W_{\succ w})) =
	  V(\st(Z_{\unrhd z'}, \lk (w, \X)))$ in the order $\lhd'$, a fortiori,
	  in the order $\lhd''$, as we need.
  \end{proof}

  
  Now, we set $\prec''$
 on $P(w)$ as the shedding order $\lhd''$ on $\lk(w, \X)$ from
 Claim~\ref{c:inductive_shedding}, restricted to
  $P(w)$; see Figure~\ref{f:order_Pw2}. 

\begin{figure}
\begin{center}
  \includegraphics[page=3]{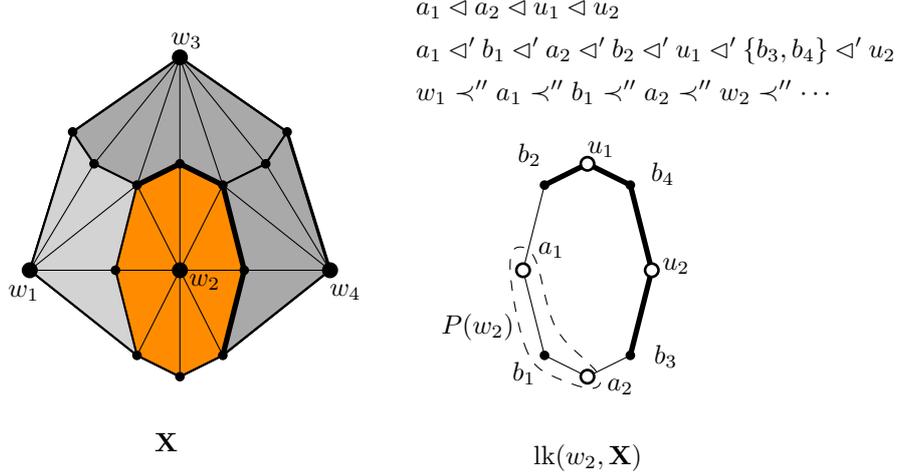}
  \caption{Setting up the order $\prec''$ on $P(w_2)$. The order $\lhd$ on
  $Z = \{a_1, a_2, u_1, u_2\}$ induces a star decomposition of $(\lk(w_2, \X), U)$
  where $U = \{u_1, u_2\}$. Then $\lhd'$ is the corresponding partial order on
  $V(\lk(w_2, \X))$ (similarly as $\prec'$ corresponds to $\prec$). Finally, we
  take a shedding order $\lhd''$ on $\lk(w_2, \X)$ extending $\lhd'$ (by induction)
  and restrict it to $P(w_2)$ obtaining $\prec''$.}
  \label{f:order_Pw2}
\end{center}
\end{figure}

 It remains to check that $\prec''$ is the required shedding order which we do
 via Lemma~\ref{l:is_shedding}. Namely,
 given a vertex $v \in V(\X)$ which is not one of the last $k+1$ vertices,
we need to check that 
 $\lk(v, \X_{\succeq'' v})$ is vertex decomposable and $(k-1)$-dimensional and
 that $\X_{\succ'' v}$ is pure $k$-dimensional. 
We distinguish whether $v \in W$.

\bigskip

\emph{Case 1}, $v \in W$:
 We observe that $v$ is not the last vertex $\hat w$ of $\prec$ as $\hat w$ is
 also the last vertex of $\prec''$. This allows to describe $\lk(v, \X_{\succeq'' v})$
 as an overlap.
\begin{claim}
\label{c:lk_o}
  $\lk(v, \X_{\succeq'' v}) = \O(v,
    W_{\succ v})$.
\end{claim}

\begin{proof}
According to the definition of the overlap, we have $\O(v,
    W_{\succ v}) = \lk(v, \X) \cap \st(W_{\succ v}, \X)$.

  First, let us assume that $\sigma \in \lk(v, \X) \cap \st(W_{\succ v}, \X)$.
  Each vertex $v'$ of $\st(W_{\succ v}, \X)$ satisfies $p(v') \succ v$ which
  implies $v' \succ'' v$. Therefore, each vertex of $\sigma \cup \{v\}$ belongs
  to $V(\X_{\succeq'' v})$. Because $\sigma$ simultaneously belongs to $\lk(v,
  \X)$, we get that it belongs to $\lk(v, \X_{\succeq'' v})$.

  Now, for the second inclusion, let us assume that $\sigma \in \lk(v,
  \X_{\succeq'' v})$.
Immediately, $\sigma \in \lk(v, \X)$.
  Because $\sigma \in \X_{\succ' v} = \X_{\succ'' v}$, Lemma~\ref{l:star_induced} gives
    $\sigma \in \st(W_{\succ v}, \X)$.
%
%
\end{proof}

By Claim~\ref{c:lk_o}, $\lk(v, \X_{\succeq'' v}) = \O(v,
  W_{\succ v})$ which is $(k-1)$-dimensional by Remark~\ref{r:sd}(ii).
  In addition, $\lk(v, \X_{\succeq'' v})$ is vertex decomposable, as we
 checked that $\lk(v, \X)$ is vertex decomposable in some shedding order
 starting with $P(v) = V(\lk(v, \X)) \setminus V(\O(v,  W_{\succ v}))$ and
 continuing with $V(\O(v,  W_{\succ v}))$; see Claim~\ref{c:inductive_shedding}. Also $\X_{\succ'' v} = \X_{\succ' v}
 = 
 \st(W_{\succ v}, \X)$ by Lemma~\ref{l:star_induced}.
 Therefore $\X_{\succ'' v}$ is pure $k$-dimensional by
  Remark~\ref{r:sd}(ii). This finishes Case~1.

\bigskip

\emph{Case 2}, $v \not\in W$:
Let $w := p(v) \in W$. Note that, in particular, $w \succ' v$.  
We first check that $\lk(v,
   \X_{\succeq'' v})$ is vertex decomposable and $(k-1)$-dimensional. This will follow from the following two claims.

\begin{claim}
\label{c:linkv_join}
  The link $\lk(v, \X_{\succeq'' v})$ is the join of $w$ and
  $\lk(\{v,w\},\X_{\succeq'' v})$.
\end{claim}


\begin{proof}
  The link $\lk(\{v,w\},\X_{\succeq'' v})$ consists of simplices $\sigma \in
  \X_{\succeq'' v}$ satisfying $v, w \not\in \sigma$, and $\sigma \cup \{v,w\}
  \in \X_{\succeq'' v}$. Therefore, the join of $w$ and
  $\lk(\{v,w\},\X_{\succeq'' v})$, considered as a subcomplex of $\X_{\succeq''
  v}$, consists of simplices $\sigma \in
  \X_{\succeq'' v}$ satisfying
  \begin{equation}
    \label{e:w*link}
  v \not\in \sigma, \hbox{ and } \sigma \cup \{v,w\}
  \in \X_{\succeq'' v}.
  \end{equation}
  On the other hand, $\lk(v, \X_{\succeq'' v})$ consists of simplices $\sigma \in
  \X_{\succeq'' v}$ satisfying
  \begin{equation}
    \label{e:linkv}
  v \not\in \sigma, \hbox{ and } \sigma \cup \{v\}
  \in \X_{\succeq'' v}.
  \end{equation}

A simplex $\sigma \in \X_{\succeq'' v}$ satisfying~\eqref{e:w*link} immediately
  satisfies~\eqref{e:linkv} as well. Thus, it remains to consider a simplex
  $\sigma \in \X_{\succeq'' v}$ satisfying~\eqref{e:linkv}; and to show that it
  satisfies~\eqref{e:w*link}.

 
  First, we want to deduce that $\sigma \cup \{v\}$ belongs to $\st(w', \X)$
  for some $w' \succeq w$. If $w$ is the first vertex of $W$ in the order
  $\prec$, then this claim follows from the fact that $W$ induces a star
  partition of $\X$. If $w$ is not the first vertex of $W$, let $w^-$ be the
  vertex that immediately precedes $w$ in the order $\prec$. Note that $\sigma
  \in \X_{\succ' w^-}$. By
  Lemma~\ref{l:star_induced}, $\sigma \cup
  \{v\}$ belongs to
  $\st(w', \X)$ for some $w' \succ w^-$, that is, $w' \succeq w$ as required.
  
  Now, because $p(v) = w$, the only option is that $w' = w$. Therefore,
  $\sigma \cup \{v\} \in \st(w, \X)$; that is, $\sigma \cup \{v,w\} \in \X$.
  Because all vertices of $\sigma \cup \{v,w\}$ belong to $\X_{\succeq'' v}$,
  $\sigma$ satisfies~\eqref{e:w*link}.
\end{proof}

\begin{claim}
\label{c:linkv_decomposable}
  The link $\lk(\{v,w\},\X_{\succeq'' v})$ 
  is vertex decomposable and $(k-2)$-dimensional.
\end{claim}

\begin{proof}
We will deduce the claim from the `only if' part of Lemma~\ref{l:is_shedding}
  used with the pure $(k-1)$-dimensional complex $\lk(w, \X)$ and the shedding
  order $\lhd''$, coming from Claim~\ref{c:inductive_shedding}. Let us recall
  that $\prec''$ is defined so that it coincides with $\lhd''$ on $\lk(w, \X)$
  restricted to $P(w)$. Because $v \in P(w)$, we in particular get that $\lk(w,
  \X)_{\succeq'' v} = \lk(w, \X)_{\unrhd'' v}$. 
  
  In order to apply Lemma~\ref{l:is_shedding}, we also check that
  $v$ is not among the last $k$ vertices of the aforementioned shedding
  $\lhd''$ of $\lk(w,\X)$. 
  If $w = \hat w$, we get this because we assume that $v$ is not among the last
 $k+1$ vertices in the $\prec''$ order on $V(\X)$ (the last one is $\hat w$,
 and the vertices of $P(\hat w)$ immediately precede). If $w \neq \hat w$ we
 get this because the overlap $\O(w, W_{\succ w})$ is $(k-1)$-dimensional (see
  Remark~\ref{r:sd}(ii)), and the vertices of this overlap belong to $V(\lk(w,\X))$ while they do not belong to $P(w)$.

  Now, using Lemma~\ref{l:is_shedding} as explained above, we get that $\lk(v,
  \lk(w,\X)_{\succeq'' v}) =
  \lk(v,
  \lk(w,\X)_{\unrhd'' v})$ is vertex decomposable and $(k-2)$-dimensional.
  Finally, $\lk(v,
  \lk(w,\X)_{\succeq'' v}) = \lk(\{v,w\},\X_{\succeq'' v})$ because
  $\X_{\succeq'' v}$ is an induced subcomplex of $\X$. 
\end{proof}


It follows immediately from Claims~\ref{c:linkv_join} and~\ref{c:linkv_decomposable} that 
$\lk(v, \X_{\succeq'' v})$ is $(k-1)$-dimensional. In addition, because the join of two vertex decomposable complexes is vertex
 decomposable~\cite[Proposition~2.4]{provan-billera80}, we also get that $\lk(v, \X_{\succeq'' v})$ is vertex decomposable.
 
Finally, we need to check that $\X_{\succ''v}$ is pure $k$-dimensional. We need
one more claim; see also Figure~\ref{f:pure_union}.

\begin{figure}
\begin{center}
  \includegraphics[page=4]{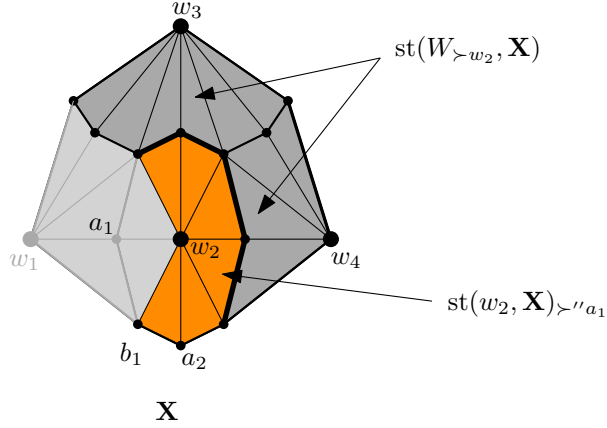}
  \caption{The complex $\X_{\succ'' a_1}$ as the union of $\st(W_{\succ w_2}, \X)$
  and $\st(w_2, \X)_{\succ'' a_1}$. Here we use the order $\succ''$ from
  Figure~\ref{f:order_Pw2}.}
  \label{f:pure_union}
\end{center}
\end{figure}

\begin{claim}
\label{c:pure_union}
 If $w \neq \hat w$, then $\X_{\succ''v} = \st(W_{\succ w}, \X) \cup \st(w,   \X)_{\succ'' v}$. If, $w = \hat w$, then $\X_{\succ''v} =
  \st(w,   \X)_{\succ'' v}$.
\end{claim}

\begin{proof}
  If $w \neq \hat w$, then $\st(W_{\succ w}, \X) = \X_{\succ' w} = \X_{\succ'' w}$ by
  Lemma~\ref{l:star_induced}.  Therefore it is sufficient to show that
  every $\sigma \in \X_{\succ''v}$ which contains a vertex $v'$ with $v'
  \preceq w$ belongs to $\st(w,   \X)_{\succ'' v}$. This will resolve both cases,
  $w = \hat w$ and $w \neq \hat w$, simultaneously. The ideas in the reminder
  of the proof are very similar to the ideas in the proof of
  Claim~\ref{c:linkv_join}.
  
 First, we check that $\sigma \in \st(w',   \X)$ for some $w' \succeq w$. If
  $w$ is the first vertex of $W$, then this follows from the fact that $W$
  induces a star decomposition of $\X$. If $w$ is not the first vertex of $W$,
  let $w^-$ be the vertex of $W$ that immediately precedes $w$. By
  Lemma~\ref{l:star_induced}, $\st(W_{\succ w^-}, \X) = \X_{\succ'' w^-}$.
  Because $\sigma \in \X_{\succ'' w^-}$, this implies that there is $w' \succ
  w^-$ with $\sigma \in \st(w',   \X)$.

  On the other hand,  $\sigma$ cannot belong to $\st(w'',   \X)$ with $w''
  \succ w$ as $\sigma$ contains $v'$ with $v' \preceq w$. Therefore, $w' = w$. 
  Given that $\st(w,  \X)_{\succ'' v} = \X_{\succ''  v} \cap \st(w,   \X)$, we
  deduce that $\sigma \in \st(w,  \X)_{\succ'' v}$.
\end{proof}

The union of two pure $k$-dimensional complexes is a pure $k$-dimensional
complex. Therefore, due to Claim~\ref{c:pure_union}, it remains to check
that $\st(W_{\succ w}, \X)$ and $\st(w,   \X)_{\succ'' v}$ are pure
$k$-dimensional (the former case applies only if $w \neq \hat w$).

Checking that $\st(W_{\succ w}, \X)$ is pure $k$-dimensional is easy; see
Remark~\ref{r:sd}(ii).

For checking that $\st(w, \X)_{\succ'' v}$ is pure $k$-dimensional, we need
that $\lk(w, \X)_{\succ'' v}$ is pure $(k-1)$-dimensional. Because $v \in
P(w)$, $\lk(w, \X)_{\succ'' v} = \lk(w, \X)_{\unrhd'' v}$ where $\lhd''$ is
the shedding of $\lk(w, \X)$ as introduced below
Claim~\ref{c:inductive_shedding}.
This means that
$\lk(w, \X)_{\succ'' v}$ is an intermediate step in the shedding $\unrhd''$ of
$\lk(w, \X)$. If we realize that $v$ is not among the last $k$
vertices of the order $\lhd''$ on $\lk(w, \X)$, then we can deduce that $\lk(w,
\X)_{\succ'' v}$ is pure and $(k-1)$-dimensional. 

If $w \neq \hat w$, then $\lk(w, \X)_{\succ'' v}$ still contains the overlap $\O(w,
W_{\succ} w)$ which is $(k-1)$-dimensional by Remark~\ref{r:sd}(ii). If $w =
\hat w$, then we assume that $v$ is not among the last $k+1$ vertices of
$\prec''$ while $\prec''$ and $\lhd''$ coincide on $P(\hat w)$ and the vertices
of $P(\hat w)$ immediately precede $\hat w$ in $\prec''$. This finishes Case~2
and thereby the proof of the theorem.
%
%
%
%
\end{proof}

\section{Star decomposability in vertices}

\paragraph{Star decomposability of a barycentric subdivision.} In our approach,
we will need to consider the star decomposability of the barycentric subdivision
$\sd(\X)$ of a complex $\X$. In fact, we will consider only a special type of
star decomposition of $\sd(\X)$ using only stars of vertices of $\X$, that
is, the faces of $\X$ which are actually vertices of $X$. For a well working
induction, we will need that this property is kept also in the link condition
and the last vertex condition of Definition~\ref{d:sd}. For stating this
precisely, first, we need a more explicit description of $\lk(\vartheta, \sd(\X))$ if
$\vartheta$ is a face (possibly a vertex) of $\X$.

\begin{lemma}
\label{l:lk_sd}
Let $\vartheta$ be a face of a simplicial complex $\X$, then
$$\lk(\vartheta, \sd \X) \cong \sd \partial \vartheta * \sd \lk(\vartheta, \X).$$
In particular, if $x$ is a vertex of $\X$, then 
$$\lk(x, \sd \X) \cong \sd \lk(x, \X).$$
\end{lemma}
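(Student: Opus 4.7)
The plan is to unpack the definitions on both sides and exhibit an explicit bijection between vertices that extends to an isomorphism of simplicial complexes.

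First, I would describe $\lk(\vartheta, \sd\X)$ explicitly. A face of $\sd\X$ is a chain $\sigma_1 \subsetneq \cdots \subsetneq \sigma_n$ of nonempty faces of $\X$, and such a chain lies in $\lk(\vartheta, \sd\X)$ precisely when $\{\vartheta\} \cup \{\sigma_1,\dots,\sigma_n\}$ is again totally ordered by inclusion, i.e., each $\sigma_i$ satisfies either $\sigma_i \subsetneq \vartheta$ or $\sigma_i \supsetneq \vartheta$. Thus every face of $\lk(\vartheta, \sd\X)$ splits uniquely as a concatenation of a (possibly empty) chain of proper subsets of $\vartheta$ and a (possibly empty) chain of proper supersets of $\vartheta$.

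Next I would identify the two pieces. The chains of nonempty proper subsets of $\vartheta$ are exactly the faces of $\sd\partial\vartheta$, by definition. For the chains $\tau_1 \subsetneq \cdots \subsetneq \tau_m$ of proper supersets of $\vartheta$, the map $\tau_i \mapsto \tau_i \setminus \vartheta$ is an order-preserving bijection onto strictly increasing chains of nonempty faces of $\lk(\vartheta,\X)$ (because $\tau_i \supsetneq \vartheta$ implies $\tau_i \setminus \vartheta$ is a nonempty face of the link, disjoint from $\vartheta$, and the chain stays strictly increasing since $\vartheta$ is subtracted from each level). So these chains are exactly the faces of $\sd\lk(\vartheta,\X)$.

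I would then assemble the isomorphism. Define $\varphi \colon V(\sd\partial\vartheta) \sqcup V(\sd\lk(\vartheta,\X)) \to V(\lk(\vartheta,\sd\X))$ by $\varphi(\sigma) = \sigma$ on the first summand and $\varphi(\tau') = \vartheta \cup \tau'$ on the second. The analysis above shows that the image of a face $A \sqcup B$ of $\sd\partial\vartheta * \sd\lk(\vartheta,\X)$ is a face of $\lk(\vartheta,\sd\X)$ and that every face arises uniquely in this way; since vertex sets of the two sides are in $\varphi$-bijection and face sets correspond, $\varphi$ induces the claimed simplicial isomorphism. For the "in particular" statement, if $\vartheta = \{x\}$, then $\partial\vartheta = \{\emptyset\}$ has no nonempty face, so $\sd\partial\vartheta = \{\emptyset\}$ is the join unit, and $\lk(x,\sd\X) \cong \sd\lk(x,\X)$.

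The only delicate points are bookkeeping ones: remembering that $\sd$ of $\{\emptyset\}$ is $\{\emptyset\}$ (not $\emptyset$) so that the vertex case works as a genuine specialization of the general formula, and checking that the two halves of a chain in $\lk(\vartheta,\sd\X)$ really are independent in the sense required for a join (which follows because a chain of subsets of $\vartheta$ and a chain of supersets of $\vartheta$ are automatically concatenable into a single chain containing $\vartheta$). I do not anticipate any serious obstacle beyond making these identifications carefully.
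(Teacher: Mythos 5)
Your proposal is correct and follows essentially the same route as the paper: an explicit vertex bijection that leaves proper subfaces of $\vartheta$ alone and removes (respectively adds) $\vartheta$ from proper superfaces, together with the observation that faces of $\lk(\vartheta,\sd\X)$ are exactly chains splitting into a chain below $\vartheta$ and a chain above it, which matches the join structure. Your map is just the inverse of the paper's $\Psi$, and your handling of the vertex case via $\sd\partial\{x\}=\{\emptyset\}$ being the join unit agrees with the paper's conventions.
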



\begin{proof}
  We will construct a simplicial isomorphism $$\Psi \colon V(\lk(\vartheta, \sd
  \X)) \to V(\sd  \partial \vartheta * \sd \lk(\vartheta, \X)).$$
  
First, we observe that 
$$
V(\sd  \partial \vartheta * \sd \lk(\vartheta, \X)) = V(\sd  \partial
\vartheta) \sqcup V(\sd \lk(\vartheta, \X)) = \partial \vartheta \sqcup
\lk(\vartheta, \X).
$$

 Next, we realize that the vertices of $\lk(\vartheta, \sd \X)$ are all the faces $\lambda \neq \emptyset, \vartheta$ of
  $\X$ such that $\{\lambda, \vartheta\}$ forms a simplex of $\sd \X$, that is,
  either $\emptyset \neq \lambda \subsetneq \vartheta$ or $\vartheta \subsetneq
  \lambda$. Thus, we can define $\Psi$ in the following way

$$
\Psi(\lambda) = 
\begin{cases}
  \lambda \in \partial \vartheta & \hbox{ if } \emptyset \neq \lambda \subsetneq
  \vartheta,\\
  \lambda \setminus \vartheta  \in \lk(\vartheta, \X) & \hbox{ if }
  \vartheta \subsetneq \lambda.\\
\end{cases}
$$
  
From the description above, it immediately follows that $\Psi$ is a bijection.
It is also routine to check that $\Psi$ is a simplicial isomorphism. Indeed, 
a simplex of $\lk(\vartheta, \sd \X)$ is a collection $\{\alpha_1, \dots,
\alpha_k, \beta_1, \dots, \beta_\ell\}$ satisfying
$$
\emptyset \neq \alpha_1 \subsetneq \cdots \subsetneq \alpha_k \subsetneq
\vartheta \subsetneq \beta_1 \subsetneq \cdots \subsetneq \beta_{\ell}.
$$
Such a simplex maps to a simplex $\{\alpha_1, \dots, \alpha_k, \beta_1
\setminus \vartheta , \dots, \beta_\ell \setminus \vartheta\}$ of $\sd  \partial
\vartheta * \sd \lk(\vartheta, \X)$ and the inverse map works analogously (note
that $\beta_i \setminus \vartheta$ is disjoint from $\vartheta$ whereas
$\alpha_i$ are subsets of $\vartheta$).
\end{proof}

Now, we extend the isomorphism above to certain pairs; for the statement, recall that $\O(x,W)$ is
defined via formula~\eqref{e:overlap}.

\begin{lemma}
\label{l:links_in_link}
Let $x$ be a vertex and $W$ a subset of vertices of the simplicial complex $\X$
such that $x \not\in W$. 
Then 
$$\left(\lk(x, \sd \X), \O_{\sd \X}(x, W)\right) \cong \left(\sd \lk(x,\X),
  \st(W \cap V(\lk(x,\X)),
\sd\lk(x,\X))\right).$$
\end{lemma}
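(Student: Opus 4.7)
The plan is to build the required isomorphism of pairs by specializing the isomorphism already constructed in Lemma~\ref{l:lk_sd} to $\vartheta = \{x\}$. Since $\sd \partial \{x\} = \{\emptyset\}$ is a neutral factor for the join, that lemma yields a simplicial isomorphism $\Psi\colon \lk(x,\sd\X) \to \sd \lk(x,\X)$ defined on vertices by $\Psi(\lambda) = \lambda \setminus \{x\}$, where $\lambda$ ranges over the faces of $\X$ with $\{x\} \subsetneq \lambda$. Since $\Psi$ is already known to be a simplicial isomorphism of the first coordinates, the whole task reduces to checking that $\Psi$ identifies the two subcomplexes appearing as second coordinates.

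\textbf{Key verification.} Unfolding~\eqref{e:overlap}, a simplex $\tau = \{\lambda_1 \subsetneq \cdots \subsetneq \lambda_k\}$ of $\lk(x,\sd\X)$ lies in $\O_{\sd\X}(x,W)$ iff there is some $w \in W$ with $\tau \cup \{\{w\}\}$ a chain in $\X$. I would first observe that $\{w\} \notin \tau$: each $\lambda_i$ strictly contains $\{x\}$ and hence has size $\geq 2$, while $\{w\}$ has size $1$, and moreover $\{x\} \subsetneq \{w\}$ would force $w = x$, excluded by $x \notin W$. So $\{w\}$ has to be inserted somewhere new in the chain; since it is a singleton, the only possibility is at the bottom, giving $\{w\} \subsetneq \lambda_1$, i.e., $w \in \lambda_1 \setminus \{x\}$. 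In particular $w \in V(\lk(x,\X))$, so $w \in W \cap V(\lk(x,\X))$, and the smallest vertex $\lambda_1 \setminus \{x\}$ of $\Psi(\tau)$ contains $w$, placing $\Psi(\tau)$ in $\st(\{w\}, \sd\lk(x,\X))$. For the reverse direction, a simplex $\mu = \{\mu_1 \subsetneq \cdots \subsetneq \mu_k\}$ of $\sd \lk(x,\X)$ belonging to $\st(\{w\}, \sd\lk(x,\X))$ for some $w \in W \cap V(\lk(x,\X))$ satisfies either $\{w\} \in \mu$ or $\{w\}$ can be prepended; either way $w \in \mu_1$. Then $\Psi^{-1}(\mu) = \{\mu_i \cup \{x\}\}_i$, and because $x \neq w$ we have $\mu_1 \cup \{x\} \neq \{w\}$, so $\{w\}$ can be prepended to $\Psi^{-1}(\mu)$ in $\sd\X$, witnessing $\Psi^{-1}(\mu) \in \O_{\sd\X}(x,W)$.

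\textbf{Expected obstacle.} There is no substantive technical difficulty; the proof is really careful bookkeeping on chains once one writes out what the star of a vertex in $\sd\X$ looks like. The only place where one must be attentive is the use of the assumption $x \notin W$ to rule out $w = x$ both when arguing $\{w\} \notin \tau$ and when prepending $\{w\}$ to $\Psi^{-1}(\mu)$; indeed, without this assumption the statement fails, since $\O_{\sd\X}(x,\{x\})$ would equal all of $\lk(x,\sd\X)$ while $\{x\} \cap V(\lk(x,\X))$ would be empty.
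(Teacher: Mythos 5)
Your proposal is correct and follows essentially the same route as the paper: specialize the isomorphism $\Psi$ of Lemma~\ref{l:lk_sd} to the vertex $x$ and then verify on chains that a singleton $\{w\}$ can only be inserted at the bottom, forcing $w \in \beta_1$ (hence $\{x,w\} \in \X$ and $w \in V(\lk(x,\X))$), which identifies the two second entries of the pairs. The paper packages this as an explicit description of the faces of $\O_{\sd\X}(x,w)$ as chains with $\{x,w\} \subseteq \beta_1 \subsetneq \cdots \subsetneq \beta_\ell$ mapped to chains with $\{w\} \subseteq \gamma_1$, but the content and level of rigor match yours.
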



Though the formula in Lemma~\ref{l:links_in_link} may seem complicated
at first sight, it has a nice geometric interpretation. All objects are
subcomplexes of $\sd \X$ and the isomorphism in the formula pushes the pair on
the left hand side farther away from $x$; see Figure~\ref{f:pair_s}.

\begin{figure}
\begin{center}
  \includegraphics{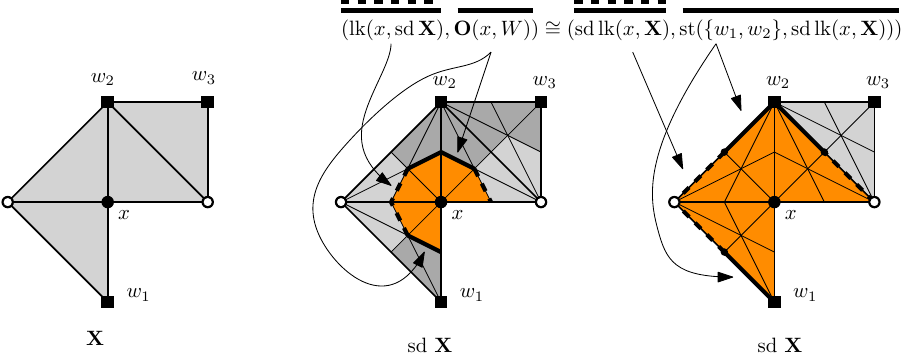}
  \caption{Isomorphism from Lemma~\ref{l:links_in_link} with $W =
  \{w_1,w_2,w_3\}$. The left hand side of the formula in
  Lemma~\ref{l:links_in_link} is depicted in the middle picture and the right
  hand side is in the right picture. Note that $W \cap V(\lk(x, \X)) =
  \{w_1,w_2\}$ as $w_3$ does not belong to $\lk(x, \X)$.}
\label{f:pair_s}
\end{center}
\end{figure}

\begin{proof}
  From Lemma~\ref{l:lk_sd} we have a simplicial isomorphism
  $\Psi$ from $\lk(x, \sd \X)$ to $\sd \lk(x,\X)$. Therefore, it remains to show that
  $\Psi$ maps $\O_{\sd \X}(x, w) := \lk(x, \sd \X) \cap \lk(w,\sd \X)$ to $\st(w,
  \sd \lk(x,\X))$ for $w \in W \cap V(\lk(x,\X))$, where we use the explicit
  $\Psi$ from the proof of Lemma~\ref{l:lk_sd}, and that $\O_{\sd \X}(x, w) =
  \emptyset$ for $w \in W \setminus V(\lk(x,\X))$.
  (Note that $\O_{\sd \X}(x,W) =  \bigcup_{w \in W} \O_{\sd \X}(x,w)$.)

  The faces of $\O_{\sd \X}(x,w)$ are collections $\{\beta_1, \dots, \beta_{\ell}\}$ of
  faces of $\X$ satisfying
$$
\{x, w\} \subseteq \beta_1 \subsetneq \cdots \subsetneq \beta_{\ell}.
$$
Let us emphasize that the first inclusion need not be strict. Therefore,
$\O_{\sd X}(x,w)$ is non-empty if and only if $\{x, w\} \in \X$, that is, if and only
if $w \in W \cap V(\lk(x,\X))$ as required. In sequel, we assume that $w \in W
\cap V(\lk(x,\X))$.

The collections $\{\beta_1, \dots, \beta_{\ell}\}$
are mapped under $\Psi$ to $\{\beta_1\setminus \{x\}, \dots, \beta_{\ell}
\setminus \{x\}\}$ satisfying the same condition due to the description of
$\Psi$ in the proof of Lemma~\ref{l:lk_sd}. Setting $\gamma_j =
\beta_j\setminus\{x\}$ we get
$$
\{w\} \subseteq \gamma_1 \subsetneq \cdots \subsetneq \gamma_{\ell}
$$
for $\gamma_j$ not containing $x$ but such that $\gamma_j \cup \{x\}$ is a face of $\X$, which is
exactly a description of $\st(w, \sd(\lk(x,\X)))$.
\end{proof}


Now, we can define star decomposibility in vertices:

\begin{definition}[Star decomposability in vertices]
\label{d:sdv}
Let $\X$ be a pure, $k$-dimensional simplicial complex, $k \geq -1$ and let $X
\subseteq V(\X)$. 
We inductively
define \emph{star decomposability in vertices} of the pair $(\sd \X, X)$. We also say that
$\sd \X$ is \emph{star decomposable in vertices} if the pair $(\sd \X, V(\X))$
is star decomposable in vertices.

If $k = -1$, then $(\sd \{\emptyset\}, \emptyset) =
(\{\emptyset\}, \emptyset)$ is star decomposable in vertices. (This is the same as star decomposability in this case.)

If $k \geq 0$, then $(\sd \X, X)$ is star decomposable in vertices, if there
is a total order $\prec$ on the set $V(\X)$, inducing a star partition of
$\sd \X$, with the following properties.\footnote{Note that $V(\X)$ induces a
star partition of $\sd \X$ for an arbitrary complex $\X$.}

\begin{description}
  \item[Order condition:] $X = V(\X)_{\succeq w'}$ for some $w' \in
    V(\X)$.  
  \item[Link condition:] For any vertex $w \in V(\X)$ except for the last vertex in
    the order $\prec$, the pair $\left(\sd \lk(w,\X), V(\lk(w, \X))_{\succ w}
     \right)$ is star decomposable in vertices.
   \item[Last vertex condition:] For the last vertex $\hat x \in V(\X)$ in the order
   $\prec$, the link $\sd \lk(\hat x,\X)$ is star decomposable in vertices. 
\end{description}
If the order $\prec$ on $W$ satisfies the three conditions above, we say that
$\prec$ induces a \emph{star decomposition of $(\sd \X, X)$ in vertices}.
\end{definition}


Lemma~\ref{l:links_in_link} implies the following proposition.

\begin{proposition}
Let us assume that the pair $(\sd \X, X)$ is star decomposable in vertices,
then it is star decomposable. 
\end{proposition}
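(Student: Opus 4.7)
The plan is to prove the proposition by induction on $k = \dim \X$. The base case $k = -1$ is immediate, because both Definition~\ref{d:sd} and Definition~\ref{d:sdv} treat $(\{\emptyset\},\emptyset)$ as star decomposable by fiat.

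For the inductive step, assume $(\sd \X, X)$ is star decomposable in vertices via a total order $\prec$ on $V(\X)$. I would propose $W := V(\X)$ with the same order $\prec$ as the witness for star decomposability in the sense of Definition~\ref{d:sd}. The footnote in Definition~\ref{d:sdv} already tells us $W$ induces a star partition of $\sd \X$, and the order condition on $X$ is literally the same in both definitions. What remains is to verify the link condition and the last vertex condition of Definition~\ref{d:sd}.

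For the link condition, fix any $w \in W$ which is not the last vertex. By Definition~\ref{d:sdv} we know the pair $\bigl(\sd \lk(w,\X),\, V(\lk(w,\X))_{\succ w}\bigr)$ is star decomposable in vertices, and since $\lk(w,\X)$ has dimension $k-1$ the inductive hypothesis promotes this to plain star decomposability. Now I would invoke Lemma~\ref{l:links_in_link} with the vertex $w$ and the set $W_{\succ w}$: the isomorphism of pairs supplied there identifies the pair $\bigl(\lk(w, \sd \X),\, \O_{\sd \X}(w, W_{\succ w})\bigr)$ with $\bigl(\sd \lk(w,\X),\, \st(V(\lk(w,\X))_{\succ w}, \sd\lk(w,\X))\bigr)$, after the small bookkeeping check that $W_{\succ w} \cap V(\lk(w,\X)) = V(\lk(w,\X))_{\succ w}$. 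Defining $U(w)$ as the image of $V(\lk(w,\X))_{\succ w}$ under the inverse isomorphism then yields a set with $\st(U(w), \lk(w,\sd \X)) = \O(w, W_{\succ w})$ and with $(\lk(w, \sd \X), U(w))$ isomorphic (hence star decomposable) via the same map.

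The last vertex condition is handled analogously, using the simpler form $\lk(\hat x, \sd \X) \cong \sd \lk(\hat x, \X)$ from Lemma~\ref{l:lk_sd}: Definition~\ref{d:sdv} gives that $\sd \lk(\hat x,\X)$ is star decomposable in vertices, the inductive hypothesis upgrades this to star decomposability, and the isomorphism transports it to $\lk(\hat x, \sd \X)$. I expect no serious obstacle here — the only point requiring care is the translation between the two views of $V(\lk(w,\X))_{\succ w}$, namely as singleton vertices of $\sd \lk(w,\X)$ on one side and as vertices of $\lk(w,\sd \X)$ on the other, which is cleanly managed by the explicit map $\Psi$ constructed in the proof of Lemma~\ref{l:lk_sd}.
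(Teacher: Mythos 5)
Your proposal is correct and takes essentially the same route as the paper's own proof: choose $W = V(\X)$ with the given order, use Lemma~\ref{l:links_in_link} to set $U(w) = \Psi^{-1}\bigl(V(\lk(w,\X))_{\succ w}\bigr)$ for the link condition (with the same bookkeeping identity $V(\X)_{\succ w} \cap V(\lk(w,\X)) = V(\lk(w,\X))_{\succ w}$), use Lemma~\ref{l:lk_sd} for the last vertex condition, and induct on dimension. The only cosmetic difference is that the paper also treats dimension $0$ as a base case where the two notions coincide (cf.\ Remark~\ref{r:sd}(iii)), which sidesteps the degenerate situation $V(\lk(w,\X)) = \emptyset$ in which your generic construction would give an empty $U(w)$.
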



\begin{proof}
We check that the order condition, the link condition and the last vertex
condition in Definition~\ref{d:sd} imply the corresponding
conditions in Definition~\ref{d:sdv}. The rest of the proof is a straightforward
induction given that in dimensions $-1$ and $0$ the notions coincide.

The order condition in Definitions~\ref{d:sd} and~\ref{d:sdv} is actually
identical.

For checking the link condition in Definition~\ref{d:sd}, for a given $w \in
V(\X)$ we need to find a set $U \subseteq V(\lk(w, \sd \X))$ such that
(i) $\st(U,\lk(w, \sd \X)) =   \O_{\sd \X}(w, V(\X)_{\succ w})$ and (ii)
   the pair $(\lk(w, \sd \X),U)$ is star decomposable in vertices (therefore star
   decomposable by induction). By Lemma~\ref{l:links_in_link} we have an
   isomorphism $\Psi$ mapping the pair 
   $$\left(\lk(w, \sd \X), \O_{\sd \X}(w, V(\X)_{\succ w})\right)$$ to the pair
   $$
   \left(\sd \lk(w,\X),  \st(V(\lk(w,\X))_{\succ w},
        \sd\lk(w,\X))\right),$$
 using that $V(\X)_{\succ w} \cap V(\lk(w,\X)) = V(\lk(w,\X))_{\succ w}$.
 We set $U := \Psi^{-1}(V(\lk(w,\X))_{\succ w})$, then (i) follows
 immediately from the isomorphism above. On the other hand, $(\lk(w, \sd
 \X),U)$ is isomorphic to $\left(\sd \lk(w,\X), V(\lk(w, \X))_{\succ w}
 \right)$ by applying $\Psi$. Therefore, (ii) indeed follows from the link
 condition of Definition~\ref{d:sdv}.

 Finally the last vertex condition of Definition~\ref{d:sdv} implies the
 same condition of Definition~\ref{d:sd} via Lemma~\ref{l:lk_sd} (and
 the induction).
\end{proof}

\paragraph{Merging orders inducing a star decomposition in vertices.}

Given simplicial complexes $\X$ and $\Y$ such that $\sd(\X)$ and
$\sd(\Y)$ are star decomposable in vertices, we want to provide an order on
$V(\X) \sqcup V(\Y)$ which induces a star decomposition in vertices of $\sd(\X
* \Y)$. For the proof of our main result we need some flexibility how to merge
 the orders on $V(\X)$ and $V(\Y)$. First we provide a recipe that works in
 general but does not give all we need. This is the contents of forthcoming
 Proposition~\ref{p:join}. Then we also provide a more specific recipe which 
 gives more under additional assumptions on $\Y$ (see
 Proposition~\ref{p:join_nice}).

\begin{proposition}
\label{p:join}
  Let $\X$ and $\Y$ be pure simplicial complexes such that $\sd(\X)$ and $\sd(\Y)$ are
  star decomposable in vertices. 
Let $\prec$ be an arbitrary total order on $V(\X) \sqcup V(\Y)$ satisfying that
\begin{enumerate}[(i)]
  \item the restriction of $\prec$ to $V(\X)$ induces a star decomposition in
    vertices of
    $\sd(\X)$,
  \item the restriction of $\prec$ to $V(\Y)$ induces a star decomposition in
    vertices of
    $\sd(\Y)$,
  \item if both $\X$ and $\Y$ are nonempty, then the last two elements in $\prec$ are the last element of $V(\X)$ and
    the last element of $V(\Y)$ (in arbitrary order).
\end{enumerate}
  Then $\sd(\X * \Y)$ is star decomposable in vertices in the order $\prec$ on $V(\X
  * \Y) = V(\X) \sqcup  V(\Y)$. 
\end{proposition}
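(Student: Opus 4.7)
I proceed by induction on $|V(\X)| + |V(\Y)|$. The base case, when either $\X$ or $\Y$ equals $\{\emptyset\}$, is immediate: $\X * \Y$ then equals the other factor, whose barycentric subdivision is star decomposable in vertices by hypothesis. For the inductive step I verify the three conditions of Definition~\ref{d:sdv} for the pair $(\sd(\X * \Y), V(\X * \Y))$ in the order $\prec$; the order condition is immediate.

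A preliminary observation underlies the induction: in Definition~\ref{d:sdv} the link and last-vertex conditions depend on the order but not on the subset appearing in the pair. Consequently, whenever an order $\prec'$ on $V(\L)$ induces a star decomposition in vertices of $(\sd \L, X)$ for some nonempty tail $X$, it induces one of $(\sd \L, X')$ for every nonempty tail $X'$ of $\prec'$, and in particular of $\sd \L$ itself.

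For the link condition, fix $w \in V(\X * \Y)$ not last in $\prec$; by symmetry assume $w \in V(\X)$, so $\lk(w, \X * \Y) = \lk(w, \X) * \Y$. Using hypothesis~(i), applied via the link condition of $\prec|_{V(\X)}$ at $w$ if $w$ is not last in $V(\X)$, or via the last-vertex condition otherwise, together with the preliminary observation, I obtain an order $\prec^X$ on $V(\lk(w, \X))$ inducing a star decomposition in vertices of $\sd \lk(w, \X)$. I then interlace $\prec^X$ with $\prec|_{V(\Y)}$ into an order $\prec^{*}$ on $V(\lk(w, \X)) \sqcup V(\Y)$ satisfying: (a) $\prec^{*}$ restricts to $\prec^X$ and $\prec|_{V(\Y)}$ on the two factors; (b) every vertex $\prec w$ in $\prec$ precedes every vertex $\succ w$ in $\prec$, so that $V(\lk(w, \X) * \Y)_{\succ w}$ is a tail of $\prec^{*}$; (c) the last two elements of $\prec^{*}$ are the last of $\prec^X$ and the last of $\prec|_{V(\Y)}$. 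Such an interlacing exists: in the generic case, the link condition of $\prec|_{V(\X)}$ places the last element of $\prec^X$ inside $V(\lk(w, \X))_{\succ w}$, while in the boundary case where $w$ is the last of $V(\X)$ in $\prec$, hypothesis~(iii) forces $V(\lk(w, \X) * \Y)_{\succ w}$ to be the singleton consisting of the last vertex of $V(\Y)$, and the interlacing still fits. In either case the last vertex of $\prec|_{V(\Y)}$ is $\succ w$ in $\prec$, by hypothesis~(iii) together with the fact that $w$ is not last. Applying the induction hypothesis to $(\lk(w, \X), \Y, \prec^{*})$ yields that $\sd(\lk(w, \X) * \Y)$ is star decomposable in vertices via $\prec^{*}$; the preliminary observation then transfers this to the pair with target tail $V(\lk(w, \X) * \Y)_{\succ w}$, which is a nonempty tail of $\prec^{*}$ by construction.

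The last-vertex condition is entirely analogous: for $\hat x$ the last of $\prec$, WLOG $\hat x \in V(\X)$, I extract an order on $V(\lk(\hat x, \X))$ from the last-vertex condition of $\prec|_{V(\X)}$, merge it with $\prec|_{V(\Y)}$ so that the two respective last elements occupy the last two positions of the merged order, and invoke the induction hypothesis on $(\lk(\hat x, \X), \Y)$. The main obstacle throughout is not a single deep argument but the careful simultaneous engineering of the interlaced order $\prec^{*}$ so as to satisfy (a)--(c) along with hypotheses~(i)--(iii) of the proposition at each recursive level; the tail structure of $\prec^X$ and hypothesis~(iii) of the proposition are precisely what make this coordination possible.
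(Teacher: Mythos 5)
Your proposal is correct and takes essentially the same route as the paper's proof: induction, the identification $\lk(w,\X*\Y)=\lk(w,\X)*\Y$, and an application of the induction hypothesis to a merged order whose last two elements are the last vertices of the two factors, with the target tail $V(\lk(w,\X)*\Y)_{\succ w}$ arranged to be a tail of the merged order. The only difference is organizational: the paper packages your interlacing and tail-transfer observation into a statement about pairs (Corollary~\ref{c:join_pairs}, deduced from the proposition in the same dimension and invoked at lower dimension), whereas you inline that bookkeeping via the observation that the second entry of the pair enters Definition~\ref{d:sdv} only through the order condition.
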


\begin{corollary}
\label{c:join_pairs}
 Let $\X$ and $\Y$ be simplicial complexes and $X \subseteq V(\X)$, $Y
  \subseteq V(\Y)$. 
  Assume that the
 pairs $(\sd \X, X)$ and $(\sd \Y, Y)$ are star
 decomposable in vertices. Then the pair $(\sd (\X * \Y), X \sqcup Y)$
 is star decomposable in vertices as well. In addition, if $|Y| = 1$,
 then the pair $(\sd (\X * \Y), Y)$ is star decomposable in vertices.
\end{corollary}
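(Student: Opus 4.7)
The plan is to deduce both parts of the corollary directly from Proposition~\ref{p:join} by exhibiting an explicit interleaving of the orders on $V(\X)$ and $V(\Y)$ that witness star decomposability in vertices of $(\sd\X, X)$ and $(\sd\Y, Y)$. The key observation is that the three conditions of Proposition~\ref{p:join} constrain only the restrictions of $\prec$ to each factor and the identity of the last two elements; therefore, once these are respected, we are free to reorder elements within each factor in any way compatible with $\prec_\X$ and $\prec_\Y$, and in particular we may arrange for the desired distinguished set to be a tail of the merged order. Moreover, since the link and last-vertex conditions of Definition~\ref{d:sdv} refer only to the order on $V(\X * \Y)$, once Proposition~\ref{p:join} provides a star decomposition of $\sd(\X * \Y)$, i.e.\ of the pair $(\sd(\X * \Y), V(\X * \Y))$, the same order witnesses star decomposability of every pair $(\sd(\X * \Y), Z)$ with $Z$ a tail of $\prec$.

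First I would dispose of the degenerate cases $\X = \{\emptyset\}$ and $\Y = \{\emptyset\}$, where $\X * \Y$ reduces to $\Y$ or $\X$ and both claims follow directly from the hypotheses (when $\Y = \{\emptyset\}$ we have $|Y| = 0$, so the second claim is vacuous). Otherwise both factors have dimension at least $0$; I would fix orders $\prec_\X, \prec_\Y$ inducing star decompositions of $(\sd\X, X)$ and $(\sd\Y, Y)$, with last elements $\hat x, \hat y$. The order condition of Definition~\ref{d:sdv} forces $X$ to be a non-empty tail of $\prec_\X$ (so $\hat x \in X$) and similarly $\hat y \in Y$.

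For the first claim I would define $\prec$ on $V(\X) \sqcup V(\Y)$ by concatenating the blocks
\[
V(\X) \setminus X,\quad V(\Y) \setminus Y,\quad X \setminus \{\hat x\},\quad Y \setminus \{\hat y\},\quad \hat x,\quad \hat y,
\]
each block internally ordered by the original order on the relevant factor. The restrictions to $V(\X)$ and $V(\Y)$ recover $\prec_\X$ and $\prec_\Y$ because $X$ and $Y$ are tails of their respective orders, the last two elements are $\hat x$ and $\hat y$, and the tail comprising the last four blocks is exactly $X \sqcup Y$. Proposition~\ref{p:join} together with the remark in the first paragraph then yields the first claim. For the second claim the hypothesis $|Y|=1$ forces $Y = \{\hat y\}$, and I would merge with the slightly shorter pattern
\[
V(\X) \setminus X,\quad V(\Y) \setminus \{\hat y\},\quad X \setminus \{\hat x\},\quad \hat x,\quad \hat y,
\]
for which the three hypotheses of Proposition~\ref{p:join} are verified identically and $Y = \{\hat y\}$ is the singleton tail of $\prec$. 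I do not foresee any serious obstacle; the argument is essentially bookkeeping, the only subtle point being that one must simultaneously accommodate the tail requirement coming from the desired pair and the last-two-elements requirement of Proposition~\ref{p:join}, which is why $\hat x$ and $\hat y$ are kept at the very end while $X \setminus \{\hat x\}$ and $Y \setminus \{\hat y\\}$ are inserted earlier within the tail.
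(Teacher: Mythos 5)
Your proposal is correct and follows essentially the same route as the paper: after disposing of the degenerate $\{\emptyset\}$ cases, the paper merges the orders in exactly the block pattern $[V(\X)\setminus X,\ V(\Y)\setminus Y,\ X\setminus\{\hat x\},\ Y\setminus\{\hat y\},\ \hat x,\ \hat y]$, invokes Proposition~\ref{p:join}, and then deduces both pair statements from the fact that $X\sqcup Y$ (respectively $\{\hat y\}$ when $|Y|=1$) is a tail of the merged order, which is precisely your observation that the distinguished set enters only through the order condition.
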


%
%
%

\begin{proof}[Proof of Corollary~\ref{c:join_pairs}]

    First, let us assume that $X = \emptyset$. Because $(\sd \X, X)$ is
      star decomposable, we deduce that $\X = \{\emptyset\}$. Consequently,
      $(\sd (\X * \Y), X \sqcup Y) = (\sd \Y, Y)$, which is star decomposable
    in vertices. Similarly, we resolve the case $Y = \emptyset$.


    Now we can assume $X,Y \neq \emptyset$.  Let $\prec_{\X}$ be a total order on $V(\X)$ inducing a star decomposition of $(\sd \X,
  X)$ in vertices and let $\prec_{\Y}$ be a total order on $V(\Y)$ inducing a star
decomposition of $(\sd \Y, Y)$ in vertices. Let $\hat x$ be the
last vertex of $V(\X)$ in $\prec_{\X}$ and $\hat y$ be the 
last vertex of $V(\Y)$ in $\prec_{\Y}$. Necessarily, $\hat x \in X$ and $\hat y
\in Y$ as $X,Y \neq \emptyset$.

We define a total order $\prec$ on $V(\X) \sqcup V(\Y)$ so that we consider the
vertices of $V(\X) \sqcup V(\Y)$ in the order $[V(\X) \setminus X, V(\Y)
\setminus Y, X \setminus \{\hat x\}, Y \setminus \{\hat y\}, \hat x, \hat y]$,
where the individual sets $V(\X) \setminus X$, $V(\Y) \setminus Y$, $X \setminus
\{\hat x\}$, and $Y \setminus \{\hat y\}$ are sorted according to $\prec_{\X}$
and $\prec_{\Y}$ respectively. Then $\prec$ satisfies the assumptions of
Proposition~\ref{p:join}. Therefore, $\sd(\X * \Y)$ is star decomposable in
vertices in the order $\prec$. 

Given that $\st(X \sqcup Y,\sd (\X * \Y)) = \st((V(\X) \sqcup V(\Y))_{\succeq
z}, \sd (\X   * \Y))$ where $z$ is the first vertex of $X \cup Y$ in $\prec$,
we deduce that $\prec$ gives also a star decomposition of $(\sd (\X * \Y),
X \sqcup Y)$ in vertices.

Finally, if $|Y| = 1$, then $Y = \{\hat y\}$. Thus $\st(Y,\sd (\X
* \Y)) = \st((V(\X) \sqcup V(\Y))_{\succeq \hat y}, \sd (\X   * \Y))$ which
means that $\prec$ gives a star decomposition of $(\sd (\X * \Y),
Y)$ in vertices as well. 
\end{proof}



\begin{proof}[Proof of Proposition~\ref{p:join}.]
%
%
  First, similarly as in the previous proof, the statement is trivial if $\X = \{\emptyset\}$ or $\Y =
\{\emptyset\}$ as a join with $\{\emptyset\}$ yields the same complex.
Therefore, we can assume $\X, \Y \neq \{\emptyset\}$. In particular, the
item~(iii) of the statement is non-void. 

Now, we prove the proposition by induction on $\dim (\X * \Y)$. The start of
the induction, when $\dim (\X * \Y) \leq 0$, is covered by the observation above.

We are given the order $\prec$ on $V(\X * \Y)$; therefore it remains to check
the order condition, the link condition and the last vertex condition.  

\medskip

As we check star decomposability of $\sd (\X * \Y)$, that is, the pair $(\sd (\X
* \Y),V(\X) \sqcup V(\Y))$, the order condition is trivial. (It is sufficient to take
the first vertex of $V(\X) \sqcup V(\Y)$ for checking the order condition.)

\medskip

For checking the link condition, we consider arbitrary $x \in V(\X) \sqcup
V(\Y)$ distinct from the last vertex. Without loss of generality, we can assume $x \in V(\X)$ as the argument
is symmetric for a vertex from $V(\Y)$.
We need to check star decomposability of the pair
$$
(\sd(\lk(x, \X * \Y)), V(\lk(x, \X * \Y))_{\succ x}).
$$
Given that $x \in V(\X)$, this equals
\begin{equation}
\label{eq:pair_link_join}
(\sd(\lk(x, \X) * \Y), (V(\lk(x, \X)) \sqcup V(\Y))_{\succ x}).
\end{equation}


From the assumption on star decomposability of $\sd \Y$ in the order $\prec$, we deduce
that the pair
\begin{equation}
\label{eq:sd_Y}
(\sd(\Y),V(\Y)_{\succ x})
\end{equation}
is star decomposable in vertices as long as $V(\Y)_{\succ x}$ is nonempty.
However, $V(\Y)_{\succ x}$ is indeed nonempty as $x$ is not the last vertex of
$V(\X) \sqcup V(\Y)$ in $\prec$ whereas there is a vertex from $V(\Y)$ among
the last two vertices.


From the assumption on star decomposability of $\X$ in the order $\prec$,
checking the link condition gives that the pair
\begin{equation}
\label{eq:sd_X}
(\sd \lk(x, \X), V(\lk(x, \X))_{\succ x}
)
\end{equation}
is star decomposable in vertices if $x$ is not the last vertex of $V(\X)$. 
Therefore, if $x$ is not the last vertex of $V(\X)$, we will use the induction. 
From Corollary~\ref{c:join_pairs} for pairs~\eqref{eq:sd_X} and~\eqref{eq:sd_Y}
we deduce that the pair 
in~\eqref{eq:pair_link_join} is indeed star decomposable in vertices as
required. (Note that this is a correct use of the induction as we deduced
Corollary~\ref{c:join_pairs} from Proposition~\ref{p:join} in the same dimension.)

It remains to consider the case when $x$ is a last vertex of $V(\X)$. In this
case, $x$ is the second to last vertex of $V(\X) \sqcup V(\Y)$. Let $\hat y$ be
the last vertex of $V(\Y)$, that is, the last vertex of $V(\X) \sqcup V(\Y)$ as
well. Then the pair~\eqref{eq:pair_link_join} simplifies to
$$
(\sd(\lk(x, \X) * \Y), \{\hat y\}).
$$
Now, we can use Corollary~\ref{c:join_pairs} again with pairs $(\sd \lk(x, \X),
 V(\lk(x, \X)))$ and $(\sd(\Y), \{\hat y\})$, using the `in
 addition' part.

\medskip

Finally, it remains to check the last vertex condition. Let us therefore assume
that $\hat x$ is the last vertex of $V(\X) \sqcup V(\Y)$. Again, we can without loss
of generality assume that $\hat x \in V(\X)$. We need to check star
decomposability in vertices of $\sd \lk(\hat x, \X * \Y) = \sd (\lk(\hat x, \X) *
\Y)$. By the last vertex condition on $\sd(\X)$ we get that $\sd \lk(\hat x,
\X)$ is star decomposable in vertices. Therefore, by the induction applied to $\sd
(\lk(\hat x, \X))$ and $\sd \Y$, we get that $\sd (\lk(\hat x, \X) * \Y)$ is star
decomposable in vertices as required.
\end{proof}

Now, we state a more specialized version of Proposition~\ref{p:join} with an
additional condition on homology. Let us recall that given a simplicial
complex $\Y$ and $Y \subseteq V(\sd \Y)$, the star $\st(Y, \sd \Y)$ is defined
as $\bigcup_{v \in Y} \st(v, \sd \Y)$. Following our convention of neglecting a
difference between $v \in V(\Y)$ and $\{v\} \in V(\sd \Y)$, we also set 
$\st(Y, \sd \Y) := \bigcup_{v \in Y} \st(v, \sd \Y)$ for $Y \subseteq
V(\Y)$.

\begin{proposition}
\label{p:join_nice}
  Let $\X$ and $\Y$ be pure simplicial complexes, $\dim \X, \dim \Y \geq 0$,
  and $Y$ be a nonempty subset of $V(\Y)$.
  Assume that $\sd \X$ and $(\sd \Y, Y)$ are star decomposable in
  vertices and $\st(Y, \sd \Y)$ has trivial reduced homology groups. 
  Let $\prec$ be an arbitrarily
  total order on $V(\X) \sqcup V(\Y)$ satisfying: 


\begin{enumerate}[(i)]
  \item The restriction of $\prec$ to $V(\X)$ induces a star decomposition in
    vertices of
    $\sd(\X)$;
  \item The restriction of $\prec$ to $V(\Y)$ induces a star decomposition in
    vertices of
    $\sd(\Y, Y)$; and
  \item $Y = (V(\X)\sqcup V(\Y))_{\succ \hat{x}}$ where $\hat x$ is the last
    vertex of $V(\X)$ in $\prec$.
  
\end{enumerate}
  Then $\sd(\X * \Y, Y)$ is star decomposable in vertices in the order $\prec$ on $V(\X
  * \Y) = V(\X) \sqcup  V(\Y)$. 
\end{proposition}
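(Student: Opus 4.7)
The plan is to adapt Proposition~\ref{p:join}'s proof by induction on $\dim(\X * \Y)$, checking the order, link, and last-vertex conditions of Definition~\ref{d:sdv} for the pair $(\sd(\X * \Y), Y)$ in the order $\prec$. The order condition is immediate from (iii), with $w'$ the first vertex of $Y$. The last vertex $\hat w$ lies in $Y \subseteq V(\Y)$ and $\lk(\hat w, \X * \Y) = \X * \lk(\hat w, \Y)$, so combining the last-vertex condition on $(\sd \Y, Y)$ with Proposition~\ref{p:join} gives star decomposability of $\sd \lk(\hat w, \X * \Y)$.

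For the link condition at a non-last vertex $w$ I split into four cases. If $w \in V(\X) \setminus \{\hat x\}$ or $w \in V(\Y) \setminus Y$, the target $V(\lk(w, \X * \Y))_{\succ w}$ splits as a disjoint union of a nonempty $V(\X)$-part and a nonempty $V(\Y)$-part, and Corollary~\ref{c:join_pairs} applies exactly as in Proposition~\ref{p:join}; the minor ingredient is that $(\sd \Z, V(\Z)_{\succeq z})$ inherits star decomposability in vertices from $(\sd \Z, Z)$ for any $z \in V(\Z)$, since the link and last-vertex conditions are properties of the order alone. If $w = \hat x$, the target collapses to $Y$, and I apply Proposition~\ref{p:join_nice} inductively on $(\lk(\hat x, \X), \Y, Y)$: star decomposability of $\sd \lk(\hat x, \X)$ comes from the last-vertex condition on $\sd \X$, a compatible order making $Y$ the suffix is easy to build, and the homology hypothesis is unchanged.

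The remaining case is $w \in Y$, not last, where the target $V(\lk(w, \Y))_{\succ w}$ lies entirely on the $\lk(w, \Y)$ side. I apply Proposition~\ref{p:join_nice} inductively with $\X' = \X$, $\Y' = \lk(w, \Y)$, $Y' = V(\lk(w, \Y))_{\succ w}$; star decomposability of $(\sd \Y', Y')$ is the link condition on $(\sd \Y, Y)$ at $w$, and I build $\prec'$ on $V(\X) \sqcup V(\lk(w, \Y))$ by shifting the block $V(\lk(w, \Y)) \cap Y_{\prec w}$ to before $\hat x$, so that $\prec'|_{V(\X)} = \prec|_{V(\X)}$ and $\prec'|_{V(\lk(w, \Y))} = \prec|_{V(\lk(w, \Y))}$, yielding (i), (ii), (iii).

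The hard part is verifying the remaining hypothesis of this recursive call: that $\st(V(\lk(w, \Y))_{\succ w}, \sd \lk(w, \Y))$ is acyclic. By Lemma~\ref{l:links_in_link} this is isomorphic to $\O_{\sd \Y}(w, Y_{\succ w})$ (using $V(\Y)_{\succ w} = Y_{\succ w}$ because $w \in Y$ and $Y$ is a suffix of $\prec_\Y$), so I plan to propagate acyclicity from $\st(Y, \sd \Y)$ via a nerve-plus-Mayer--Vietoris argument: the cover $\{\st(y, \sd \Y)\}_{y \in Y}$ has cone elements, and any $k$-fold intersection consists of the chains whose minimum face contains a prescribed face of $\Y$---also a cone---so the nerve theorem identifies $\st(Y', \sd \Y)$ up to homotopy with the induced subcomplex of $\Y$ on $Y'$ for every $Y' \subseteq Y$. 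The link-deletion identity $\tilde H_{n-1}(\lk(v, \X)) \cong \tilde H_{n-1}(\X - v)$ (valid whenever $\X$ is acyclic) then inductively propagates acyclicity along the filtration of induced subcomplexes on successive suffixes $Y_{\succeq v}$, with the link appearing at each step being exactly the nerve of the cover of $\O_{\sd \Y}(v, Y_{\succ v})$. Intertwining this homology propagation cleanly with the outer induction on dimension is the subtle step I expect to take most care.
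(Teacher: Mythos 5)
Your overall architecture matches the paper's: induction on $\dim(\X*\Y)$, the same case split for the link condition (the mixed cases handled by Corollary~\ref{c:join_pairs} as in Proposition~\ref{p:join}; the recursive call at $\hat x$ with $(\lk(\hat x,\X),\Y,Y)$; the recursive call at $w\in Y$ with $\X$, $\lk(w,\Y)$, $V(\lk(w,\Y))_{\succ w}$), and Proposition~\ref{p:join} for the last vertex condition. The gap is exactly where you yourself locate the difficulty: justifying the homology hypothesis of the recursive call at $w\in Y$, namely that $\st(V(\lk(w,\Y))_{\succ w},\sd\lk(w,\Y))\cong\O_{\sd\Y}(w,Y_{\succ w})$ is acyclic (the paper isolates this as Lemma~\ref{l:homology_property}). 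Your nerve observation is correct and pleasant---the stars $\st(y,\sd\Y)$ and all their nonempty intersections are cones, so $\st(Y',\sd\Y)$ is homotopy equivalent to the subcomplex of $\Y$ induced on $Y'$, and $\O_{\sd\Y}(w,Y_{\succ w})$ to $\lk(w,\cdot)$ of the subcomplex induced on $Y_{\succeq w}$---but the proposed propagation is circular. The Mayer--Vietoris ``link-deletion'' identity only says that, \emph{if} the induced complex on $Y_{\succeq w}$ is acyclic, then its link at $w$ and its deletion of $w$ (the induced complex on $Y_{\succ w}$) have isomorphic reduced homology; it cannot by itself show that either vanishes, and the vanishing of the link side is precisely what you are trying to prove (and the vanishing of the deletion side is what you need to continue down the filtration). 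Nothing in the outer induction on $\dim(\X*\Y)$ supplies this: star decomposability in vertices of a pair does not constrain the homology of suffix stars (they may be nontrivial wedges of top-dimensional spheres), which is why the proposition carries the acyclicity hypothesis at all.

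The missing idea, and the way the paper breaks the circle, is to use Theorem~\ref{t:sd_vd}: the star decomposition of $(\sd\Y,Y)$ gives a shedding order of $\sd\Y$ extending $\prec'$ in which $\st(Y,\sd\Y)$, $\st(Y_{\succeq y},\sd\Y)$ and $\st(Y_{\succ y},\sd\Y)$ all occur as intermediate complexes; by Provan--Billera these are vertex decomposable, hence shellable, hence homotopy equivalent to wedges of $d$-spheres with $d=\dim\Y$. Since $d$-cycles of a subcomplex inject into those of $\st(Y,\sd\Y)$ (there are no $(d+1)$-faces), the assumed acyclicity of $\st(Y,\sd\Y)$ forces these wedges to be trivial, so $\st(Y_{\succeq y},\sd\Y)$ and $\st(Y_{\succ y},\sd\Y)$ are acyclic; only then does a single Mayer--Vietoris argument, with the cone $\st(y,\sd\Y)$ as the second piece, give acyclicity of the overlap $\O_{\sd\Y}(y,V(\Y)_{\succ y})$, which by Lemma~\ref{l:links_in_link} is the star you need inside $\sd\lk(y,\Y)$. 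Without this shellability input (or some equivalent fact confining the homology of the intermediate suffix complexes to the top degree), your filtration argument cannot get started. Two further small omissions, easily repaired: the recursive call at $w=\hat x$ falls outside the hypotheses when $\dim\X=0$ (there the pair is simply $(\sd\Y,Y)$, which is assumed), and the case $\dim\Y=0$ should be dispatched separately via Proposition~\ref{p:join}, noting that the homology hypothesis forces $|Y|=1$.
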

For the proof, we need a following auxiliary lemma which will be useful
in the induction.


\begin{lemma}\label{l:homology_property}
  Let $\Y$ be a pure simplicial complex and $Y \subseteq V(\Y)$. Assume that the pair $(\sd\Y, Y)$ is star-decomposable
  in vertices in some total order $\prec$ on $V(\Y)$ and that $\st(Y,
  \sd \Y)$ has trivial reduced homology groups.
  Then $\st(V(\lk(y, \Y))_{\succ y}, \sd (\lk(y, \Y)))$
  has trivial reduced homology groups as well for all $y \in Y$ except for the last
  vertex in $Y$.
\end{lemma}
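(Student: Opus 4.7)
The plan is to proceed by induction on $|Y|$, using Mayer--Vietoris together with Lemma~\ref{l:links_in_link} as the main tools. First I would reduce the conclusion to a statement purely inside $\sd \Y$: by the order condition of Definition~\ref{d:sdv}, $Y = V(\Y)_{\succeq y_1}$ where $y_1$ is the first vertex of $Y$, so for every $y \in Y$ one has $V(\Y)_{\succ y} \subseteq Y$, whence $Y_{\succ y} \cap V(\lk(y, \Y)) = V(\lk(y, \Y))_{\succ y}$. Lemma~\ref{l:links_in_link} then yields $\O_{\sd \Y}(y, Y_{\succ y}) \cong \st(V(\lk(y, \Y))_{\succ y}, \sd \lk(y, \Y))$, so it suffices to show $\O_{\sd \Y}(y, Y_{\succ y})$ has trivial reduced homology for every $y \in Y$ except the last.

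The base case $|Y| = 1$ is vacuous. For the inductive step with $|Y| = m \geq 2$, let $y_1 \prec \cdots \prec y_m$ denote $Y$ in order, and observe that $(\sd \Y, Y_{\succ y_1})$ inherits star decomposability in vertices in the same order $\prec$, since $Y_{\succ y_1}$ is still a tail of $V(\Y)$ and the link and last-vertex conditions of Definition~\ref{d:sdv} depend only on $\prec$. I would then apply Mayer--Vietoris to $\st(Y, \sd \Y) = \st(y_1, \sd \Y) \cup \st(Y_{\succ y_1}, \sd \Y)$; the intersection is $\O_{\sd \Y}(y_1, Y_{\succ y_1})$ because distinct vertices of $V(\Y) \subseteq V(\sd \Y)$ are non-adjacent in $\sd \Y$. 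Since $\st(y_1, \sd \Y)$ is a cone and $\st(Y, \sd \Y)$ has trivial reduced homology by hypothesis, the long exact sequence collapses to an isomorphism $\tilde H_n(\O_{\sd \Y}(y_1, Y_{\succ y_1})) \cong \tilde H_n(\st(Y_{\succ y_1}, \sd \Y))$ for every $n$.

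The plan closes once one establishes that $\st(Y_{\succ y_1}, \sd \Y)$ has trivial reduced homology: the Mayer--Vietoris isomorphism then yields the lemma for $y_1$, and applying the inductive hypothesis to $(\sd \Y, Y_{\succ y_1})$---which satisfies both hypotheses and has one fewer distinguished vertex---handles the remaining non-last vertices $y_2, \ldots, y_{m-1}$. The hard part will be this intermediate triviality, which genuinely needs the star decomposability of $(\sd \Y, Y)$ and not just the homological hypothesis. I expect to leverage the link condition at $y_1$---which via Lemma~\ref{l:links_in_link} presents $\O_{\sd \Y}(y_1, Y_{\succ y_1})$ as the star, inside $\lk(y_1, \sd \Y)$, of a vertex subset whose corresponding pair is itself star decomposable in vertices---to exhibit a collapse or deformation that moves $\st(y_1, \sd \Y)$ onto $\O_{\sd \Y}(y_1, Y_{\succ y_1})$ inside $\st(Y, \sd \Y)$, and hence $\st(Y, \sd \Y)$ onto $\st(Y_{\succ y_1}, \sd \Y)$.
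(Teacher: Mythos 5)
Your reduction via Lemma~\ref{l:links_in_link} and your Mayer--Vietoris bookkeeping are essentially the paper's (the paper applies the sequence to $\st(Y_{\succeq y},\sd \Y)=\st(y,\sd \Y)\cup\st(Y_{\succ y},\sd \Y)$ for each non-last $y$ rather than inducting on $|Y|$, but that difference is cosmetic, and your observation that $(\sd\Y, Y_{\succ y_1})$ inherits star decomposability in vertices is correct). The genuine gap is exactly the step you flag as the hard part: proving that $\st(Y_{\succ y_1},\sd\Y)$ has trivial reduced homology. Your proposed mechanism --- to extract, from the link condition at $y_1$, a collapse or deformation of $\st(y_1,\sd\Y)$ onto $\O_{\sd\Y}(y_1,Y_{\succ y_1})$ --- cannot work as stated: $\st(y_1,\sd\Y)$ is a cone, hence contractible, so any such collapse or deformation retraction would force $\O_{\sd\Y}(y_1,Y_{\succ y_1})$ to be contractible, which is (a strengthening of) precisely the conclusion you are trying to establish for $y_1$, and it simply does not follow from the star-decomposability data alone. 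For instance, if $\Y$ is the boundary of a triangle and $Y=V(\Y)$, then $(\sd\Y,Y)$ is star decomposable in vertices and the link condition at $y_1$ holds, yet $\st(y_1,\sd\Y)$ is a path while $\O_{\sd\Y}(y_1,Y_{\succ y_1})$ is two points; of course the lemma's homological hypothesis fails here, but this shows your key step must import that global hypothesis somehow, and your sketch provides no mechanism for doing so.

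The paper closes this gap by a different idea, which is what your proposal is missing. By Theorem~\ref{t:sd_vd}, star decomposability of $(\sd\Y,Y)$ yields a shedding order extending the partial order $\prec'$, and along this shedding order the complexes $\st(Y,\sd\Y)$, $\st(Y_{\succeq y},\sd\Y)$ and $\st(Y_{\succ y},\sd\Y)$ all occur as intermediate steps; by Provan--Billera each of them is therefore shellable, hence homotopy equivalent to a wedge of $d$-spheres with $d=\dim\Y$. Since deleting vertices from a pure $d$-dimensional complex cannot create new $d$-cycles, the top Betti number is non-increasing along the shedding, so the trivial wedge at $\st(Y,\sd\Y)$ forces trivial reduced homology at $\st(Y_{\succeq y},\sd\Y)$ and $\st(Y_{\succ y},\sd\Y)$ as well. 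With that input your Mayer--Vietoris step does finish the argument; without it, your proof is incomplete at its crux.
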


\begin{proof}
  Let $y \in Y$ be different from the last vertex in the order $\prec$. First, we
  show that $\st(Y_{\succ y}, \Y)$ has trivial reduced homology groups. 

  Since the pair $(\sd\Y, Y)$ is star decomposable in vertices,
  Theorem~\ref{t:sd_vd} implies that $\sd \Y$ is vertex decomposable. In
    addition, we get that $\sd \Y$ is vertex decomposable in a shedding order
    $\succ''$
    extending $\succ'$ where is derived from $\succ$. (We recall that the
    definition of the derived order is given above the statement of
    Lemma~\ref{l:star_induced}.)
    In particular, $\st(Y, \sd \Y)$ and later $\st(Y_{\succ y}, \sd\Y)$
    are intermediate steps in the sequence of complexes obtained by gradually
    removing vertices of $\Y$ in the given shedding order $\succ''$. 
    
    We also get that $\st(Y, \sd \Y)$ and $\st(Y_{\succ y}, \sd\Y)$ are
    shellable by~\cite{provan-billera80} (see Theorem~2.8 and the note below
    Definition~2.1 in~\cite{provan-billera80}). Therefore, each of them is homotopy equivalent to a wedge of $d$-spheres
    where $d = \dim \Y$; see \cite[Theorem 12.3]{kozlov08}. Since
    $\st(Y, \sd \Y)$ has trivial homology groups, this must be a trivial wedge.
    However, following the shedding order from $\st(Y, \sd \Y)$ to
    $\st(Y_{\succ y}, \sd\Y)$, we cannot introduce homology in dimension $d$
    when gradually removing vertices. Therefore, $\st(Y_{\succ y}, \sd\Y)$ has
    to be homotopy equivalent to a trivial wedge as well showing that
    $\st(Y_{\succ y}, \sd \Y)$ has trivial reduced homology groups.

    Note that $\st(Y_{\succeq y}, \sd \Y)$ has trivial reduced homology groups
    as well by analogous reasoning.

  Now, by Lemma~\ref{l:links_in_link},
  \begin{align*}
   \st(V(\lk(y, \Y))_{\succ y}, \sd (\lk(y, \Y))) \cong \O_{\sd \Y}(y, V(\Y)_{\succ y}).
  \end{align*}
 

  We use a Mayer-Vietoris sequence for $\st(Y_{\succeq y},\sd \Y)$ covered by
  $\st(y, \sd(\Y))$ and  $\st(Y_{\succ y}, \sd \Y)$. Then $\st(y, \sd(\Y)) \cap \st(Y_{\succ y}, \sd \Y) =
  \O_{\sd \Y}(y, Y_{\succ y}) = \O_{\sd \Y}(y, V(\Y)_{\succ y})$ and
  we get the following long exact sequence
  \begin{align*}
    \cdots &\longrightarrow \tilde{H}_{n+1}(\st(Y_{\succeq y},\sd \Y)) \longrightarrow \tilde{H}_n(\O_{\sd \Y}(y, V(\Y)_{\succ y})) \longrightarrow \\
    &\longrightarrow \tilde{H}_n(\st(y, \sd \Y)) \oplus \tilde{H}_n(\st(Y_{\succ y}, \sd \Y)) \longrightarrow  \tilde{H}_{n}(\st(Y_{\succeq y},\sd \Y))
    \longrightarrow \cdots
  \end{align*}
  All $\st(Y_{\succeq y},\sd \Y)$, $\st(y, \sd\Y)$ and  $\st(Y_{\succ y}, \sd \Y)$ have trivial reduced homology groups.
  Therefore, $\tilde{H}_n(\O_{\sd \Y}(y, V(\Y)_{\succ y})) \cong \tilde{H}_n(\st(V(\lk(y, \Y))_{\succ y}, \sd (\lk(y, \Y)))$ is trivial for all $n \in \Z$.
\end{proof}

\begin{proof}[Proof of Proposition~\ref{p:join_nice}]
  
Similarly, as in the proof of Proposition~\ref{p:join}, we proceed by induction on $\dim( \X \ast \Y)$.

First, we observe that the case $\dim \Y = 0$ is covered by
Proposition~\ref{p:join}. Indeed, the only issue is to verify (iii) of
Proposition~\ref{p:join}. If $\dim \Y = 0$, then $Y$ must contain a single
vertex $\hat y$ (due to the condition on homology of $\st(Y, \sd(\Y))$). Consequently,
(iii) (of this proposition) implies that the last two vertices of $\prec$ are $\hat x$ and $\hat
y$ which verifies (iii) of Proposition~\ref{p:join}.


%
 
\medskip

  Now, let us assume $\dim \X \geq 0$ and $\dim \Y \geq 1$. The order condition is satisfied since
  $Y$ is non-empty and it is equal to $(V(\X)\sqcup V(\Y))_{\succ \hat{x}}$ by
  (iii).
  
\medskip

For checking the link condition, we consider arbitrary $z \in V(\X) \sqcup
V(\Y)$ distinct from the last vertex. We need to check star decomposability of the pair
\begin{equation}
\label{e:z_lc}
(\sd(\lk(z, \X * \Y)), V(\lk(z, \X * \Y))_{\succ z}).
\end{equation}

If $z \in V(\X) \setminus \{\hat{x}\} \sqcup V(\Y) \setminus Y$, then the analysis is the same as in the proof of Proposition~\ref{p:join}.
  
If $z = \hat{x}$, the pair~\eqref{e:z_lc} becomes
$$ 
(\sd(\lk(\hat{x}, \X)\ast \Y), Y).
$$
If $\dim \X = 0$, then we further get $(\sd \Y, Y)$ which is star decomposable
in vertices by the assumptions. If $\dim \X \geq 1$, then $\dim \lk(\hat{x}, \X)
\geq 0$ and we can use the induction (note that $\sd \lk(\hat{x}, \X)$ is star
decomposable in vertices by the last vertex condition for decomposition of $\sd
\X$).

  
Finally, by assuming $z \in Y \setminus \{\hat{y}\}$, where $\hat{y}$ is the last vertex of $\prec$, we get the pair
  \begin{equation}
(\sd(\X \ast \lk(z, \Y)), V(\lk(z, \Y)_{\succ z}).  
    \label{al:link}
  \end{equation}
  By Lemma~\ref{l:homology_property} $\st(V(\lk(z, \Y)_{\succ z}, \sd (\lk(z, \Y)))$ has trivial reduced homology groups. 
  Therefore, \eqref{al:link} is star-decomposable in vertices by the induction
  hypothesis. (Here we use that $\dim \lk(z, \Y) \geq 0$ and that $(\sd \lk(z,
  \Y), V(\lk(z, \Y)_{\succ z}))$ is star decomposable in vertices by the link
  condition for the decomposition of $(\sd \Y, Y)$.) 

\medskip

Finally, we check the last vertex condition. We need star decomposability in
vertices of 
$
\sd \lk(\hat{y}, \X \ast \Y).
$
Note that $\lk(\hat{y}, \X \ast \Y) = \X \ast \lk(\hat{y}, \Y)$ as both
  sides contain simplices of the form $\xi \cup \eta$, where $\xi \in \X$,
  $\eta \cup \{\hat y\} \in \Y$, and $\hat y \not\in \eta$. Thus, we need star
  decomposability in vertices of $\sd(\X \ast \lk(\hat{y}, \Y))$.
This is star decomposable in vertices by Proposition~\ref{p:join}. (Here, we
again use that $\dim \lk(\hat y, \Y) \geq 0$ and also that $\sd \lk(\hat y,
\Y)$ is star decomposable in vertices by the last vertex condition in
the decomposition of $\sd \Y$.)
\end{proof}

\section{Proof of the main result}
In this section, we prove Theorem~\ref{t:star_decomposable} which also finishes
the proof of Theorem~\ref{t:main}.

We first need two auxiliary observations that we will use in the proof.

\begin{observation}
\label{o:boundary}
  The boundary of a simplex $\partial \sigma$ satisfies the (HRC) condition.
\end{observation}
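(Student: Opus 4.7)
The plan is to proceed by induction on $k = \dim \sigma$, with the base case $k = 0$ (where $\partial\sigma = \{\emptyset\}$) handled by the paper's conventions for $(-1)$-dimensional complexes.

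For the inductive step with $k \geq 1$, I need to verify the (RC) condition for the link of every face of $\partial\sigma$. The first step is to give an explicit formula for these links: for any nonempty $\tau \in \partial\sigma$, a direct set-theoretic computation (using that faces $\tau' \in \partial\sigma$ with $\tau \subseteq \tau'$ correspond bijectively, via $\tau' \mapsto \tau' \setminus \tau$, to proper subsets of $\sigma\setminus\tau$) yields
$$\lk(\tau, \partial\sigma) \;=\; \partial(\sigma \setminus \tau),$$
the boundary of a simplex on the smaller vertex set $\sigma \setminus \tau$. By the inductive hypothesis this complex satisfies (HRC), and in particular (RC), which is what is needed.

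The second step is to verify the (RC) condition for the link of the empty face, i.e., for $\partial\sigma$ itself. Since $\partial\sigma \cong S^{k-1}$, we have $\tilde{\beta}_{k-1}(\partial\sigma; \mathbb{Z}_2) = 1$, so I need to exhibit a single facet whose removal leaves a collapsible complex. Fix any vertex $v \in V(\sigma)$ and set $F = \sigma \setminus \{v\}$. The remaining facets of $\partial\sigma$ are exactly those containing $v$, and a short check shows
$$\partial\sigma \setminus \{F\} \;=\; \{v\} \ast \partial F,$$
the cone with apex $v$ over $\partial F$. Cones are collapsible (collapse toward the apex), so this step is complete.

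The proof is essentially routine: it is a direct unfolding of definitions combined with the well-known fact that the boundary of a simplex minus a facet is a cone, hence collapsible. No step presents a significant obstacle; the main items of bookkeeping are the formula for links in $\partial\sigma$ and a consistent treatment of the degenerate base case $\partial\sigma = \{\emptyset\}$, which is already implicit in the paper's framing of (HRC).
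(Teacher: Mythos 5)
Your proof is correct and follows essentially the same route as the paper: induction on $\dim \sigma$, where the link of a nonempty face of $\partial\sigma$ is the boundary of a smaller simplex (handled by the induction hypothesis), and the link of the empty face is $\partial\sigma$ itself, which after removing one facet becomes a cone and hence is collapsible. The only cosmetic difference is the degenerate base case, which you take to be $\{\emptyset\}$ while the paper uses the empty complex; either convention serves the same purpose.
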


\begin{proof}
We prove the observation by induction on $\dim \sigma$, starting with $\dim
\sigma = 0$, in which case $\partial \sigma = \emptyset$. 
If $\dim \sigma > 0$, let $\sigma' \subsetneq \sigma$. We need to check that $\lk (\sigma', \partial
\sigma)$ satisfies the (RC) condition. This link is again a boundary of
a simplex. If $\sigma' \neq \emptyset$, we get a simplex of small dimension,
therefore, we can use the induction. If $\sigma = \emptyset$, then $\lk
(\sigma', \partial \sigma) = \partial \sigma$ which is collapsible after
removing an arbitrary facet (it is a cone then). 
\end{proof}

\begin{observation}
  \label{o:rearrange}
  Let $\K$ be a collapsible complex and $w$ be an arbitrary vertex of $\K$. Then
$\K$ collapses to $w$.
\end{observation}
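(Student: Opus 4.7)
The plan is to induct on the number of faces of $\K$. The base case, when $\K$ consists only of the single vertex $w$, is immediate. For the inductive step, since $\K$ is collapsible and has more than one face, there is an initial elementary collapse $\K \searrow \K'$ that removes some free pair $\sigma \subsetneq \tau$, and $\K'$ is itself collapsible with strictly fewer faces. If $\sigma \neq \{w\}$, then $w$ is still a vertex of $\K'$, so the inductive hypothesis yields a collapse from $\K'$ to the point $\{w\}$, and prepending the initial collapse finishes the proof.

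The main case is $\sigma = \{w\}$, which forces $\tau = \{w, x\}$ for some vertex $x \in V(\K)$. Since $w$ has now been erased, we instead apply the inductive hypothesis to $\K'$ with the vertex $x$, obtaining a collapse sequence $\K' = \K'_0 \searrow \K'_1 \searrow \cdots \searrow \K'_m$ whose terminal complex is the single vertex $x$. The crux of the argument will be to lift this sequence back to $\K$ by setting $\K_j := \K'_j \cup \{\{w\}, \{w, x\}\}$, so that $\K_0 = \K$ and $\K_m$ is the edge on $\{w, x\}$, and to verify that each step $\K_j \searrow \K_{j+1}$ remains a valid elementary collapse. The main obstacle is precisely this lifting step: reinserting the edge $\{w, x\}$ could potentially destroy the freeness of the face removed at some step $j$, but the only face whose freeness it could affect is $\{x\}$, and the vertex $\{x\}$ is never removed along the sequence since it survives to the terminal complex. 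Consequently the lift is valid, and one further elementary collapse of the edge $\K_m$ via the free pair $\{x\} \subsetneq \{w, x\}$ produces the point $\{w\}$, completing the argument.
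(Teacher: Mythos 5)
Your proof is correct, but it takes a genuinely different route from the paper's. The paper's argument is global: it invokes Whitehead's theorem that the collapses of $\K$ can be rearranged in order of non-increasing dimension, so that $\K$ first collapses onto a graph containing all vertices of $\K$; collapsibility forces this graph to be a tree, and a tree's collapses can be reordered so that any prescribed vertex $w$ is the last one. You instead give a self-contained induction on the number of faces, whose only delicate case is when the witnessing collapse sequence begins by removing the pair $\{w\}\subsetneq\{w,x\}$; there you apply the hypothesis to $\K'$ with the vertex $x$ and lift the resulting sequence by reinserting the pendant edge. Your justification of the lift is sound: the reinserted faces $\{w\}$ and $\{w,x\}$ are proper cofaces only of $\emptyset$, $\{w\}$ and $\{x\}$, the face $\{w\}$ does not lie in any $\K'_j$, the empty face is never a free face in a collapse ending at a point, and $\{x\}$ is never removed because it survives to the terminal complex, so every free pair of the sequence for $\K'$ remains a free pair after the lift; the final collapse of the edge via $\{x\}\subsetneq\{w,x\}$ then lands on $w$. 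The trade-off is clear: the paper's proof is two lines but rests on a nontrivial classical rearrangement result, while yours is elementary and fully self-contained at the cost of the (correctly handled) bookkeeping in the lifting step, and it additionally avoids any appeal to the structure of one-dimensional complexes.
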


\begin{proof}
First, we use the well known fact that the collapses of $\K$ can be rearranged
  so that they are ordered by non-increasing dimension~\cite[Section 3]{whitehead39}.
  In particular, this means that $\K$ collapses to a graph $\G$ with $V(\G) =
  V(\K)$. This graph must be a tree as $\K$ is collapsible, and we can further
  rearrange the collapses of $\G$ so that $w$ is the last vertex.
\end{proof}

Now we prove Theorem~\ref{t:star_decomposable} by induction on the dimension of $\K$. We know that
$\K$ satisfies the (RC) condition. Therefore, there are facets $\phi_1, \dots,
\phi_t$ of $\K$ such that $\K' := \K - \{\phi_1, \dots, \phi_t\}$ is collapsible. We
further consider a sequence $(\K_1, \dots, \K_s)$ of elementary collapses of
$\K'$ where $\K' = \K_1$, $\K_s$ is a vertex (denoted by $z$), and $\K_{i+1}$
arises from $\K_i$ by removing faces $\sigma_i$ and $\tau_i$ where $\sigma_i
\subset \tau_i$ and $\dim \sigma_i = \dim \tau_i - 1$, and $\tau_i$ is the
unique maximal face containing $\sigma_i$. Then we consider the following total order
$\prec$ on nonempty faces of $\K$, that is, vertices of $\sd \K$:

$$
\phi_1 \prec \cdots \prec \phi_t \prec \sigma_1 \prec \tau_1 \prec \sigma_2
\prec \tau_2 \prec \cdots \prec \sigma_{s-1} \prec \tau_{s-1} \prec \{z\}.
$$

Our aim is to show that $\prec$ induces a star decomposition in vertices of
$\sd^2 \K$. This we will also use in the induction; that is, for
complexes $\L$ of lower dimension satisfying the (HRC) condition, we assume that a
sequence of removals of facets and collapses induces a star decomposition in
vertices of $\sd^2 \L$ as above. The proof is easy if $\dim \K = 0$ (here no
collapses are used), thus we may assume that $\dim \K > 0$ and proceed with the
second induction step.


There is essentially nothing to check for the order condition as we
provide a total order on vertices of $\sd \K$. Thus the only issue is to check
the link condition and the last vertex condition.

In order to access the vertices of $\sd \K$ more easily in the given order,
we also give them alternate names $\omega_1, \dots, \omega_k$ so that 
$$
(\phi_1, \dots, \phi_t, \sigma_1, \tau_1, \dots, \sigma_{s-1}, \tau_{s-1},
\{z\}) = (\omega_1, \dots, \omega_k)
$$
where $k= t + 2s - 1$. That is, $\phi_1 = \omega_1$, $\sigma_1 = \omega_{t+1}$,
etc.

\paragraph{Checking the last vertex condition.} Because it is easier, we check
the last vertex condition first. We need to check that $\sd \lk(\omega_k, \sd
\K)$ is star decomposable in vertices. Because $\omega_k$ is a vertex of $\K$,
this complex is isomorphic to $\sd^2 \lk (\omega_k, \K)$ by
Lemma~\ref{l:lk_sd}. Therefore, this complex is star decomposable in vertices
by induction because $\lk (\omega_k, \K)$ satisfies the (HRC) condition as this
condition is hereditary for links.

\paragraph{Checking the link condition:}
For checking the link condition, we need to check that the pair $(\sd
\lk(\omega_i, \sd \K), V(\lk(\omega_i, \sd \K))_{\succ \omega_i})$ is star
decomposable in vertices for $i \in \{1, \dots, k-1\}$. For checking this
condition we again need to `simplify' this pair so that we remove the subdivision
from the link. The tool for this is again Lemma~\ref{l:lk_sd}. For the first entry it
gives 
$$\sd \lk(\omega_i, \sd \K) \cong \sd(\sd \partial \omega_i * \sd
    \lk(\omega_i, \K)).$$ 
    We use the specific isomorphism $\Psi$ from the proof of
    Lemma~\ref{l:lk_sd} and our next task is to describe $V(\lk(\omega_i, \sd
    \K))_{\succ \omega_i})$ after applying this isomorphism.

First, we briefly describe the set $V(\lk(\omega_i, \sd \K))_{\succ \omega_i}$. The
vertices of $\lk(\omega_i, \sd \K)$ are the nonempty faces $\eta$ of $\K$ such
that either $\eta \subsetneq \omega_i$ or $\omega_i \subsetneq \eta$.
Therefore, the set $V(\lk(\omega_i, \sd \K))_{\succ \omega_i}$ consists of
faces $\eta$ as above, which in addition satisfy $\eta \succ \omega_i$. The
isomorphism $\Psi$ from the proof of Lemma~\ref{l:lk_sd} maps $\eta$ again to
$\eta$ if $\eta \subsetneq \omega_i$ and it maps $\eta$ to $\eta \setminus
\omega_i$ if $\omega_i \subsetneq \eta$. Hence
$$
\Psi(V(\lk(\omega_i, \sd \K))_{\succ \omega_i}) = V(\sd \partial
\omega_i)_{\succ \omega_i} \sqcup \{\eta \setminus \omega_i\colon \eta \supsetneq
\omega_i, \eta \succ \omega_i\},
$$
which we denote by $W$. Thus, we need to check the star decomposability in vertices of the pair
\begin{equation}
\label{e:pair_crazy}
(\sd(\sd \partial \omega_i * \sd
\lk(\omega_i, \K)), W).
\end{equation}

We distinguish several
cases according to the type of $\omega_i$.

\begin{enumerate}
  \item $\omega_i = \phi_i$, that is, $i \leq t$: 
   
    In this case, $\phi_i$ is a facet. Therefore, $\lk(\phi_i, \K) =
    \emptyset$. Also $\eta \succ \phi_i$ for all proper subfaces $\eta$.
    Therefore, the pair~\eqref{e:pair_crazy} simplifies to $(\sd(\sd \partial
    \phi_i), V(\sd \partial \phi_i))$; see Figure~\ref{f:case_phi}. 
%
    That is, we only need that $\sd(\sd \partial
        \phi_i)$ is star decomposable in vertices which follows by the induction and
    Observation~\ref{o:boundary}.

\begin{figure}
\begin{center}
  \includegraphics[page=5]{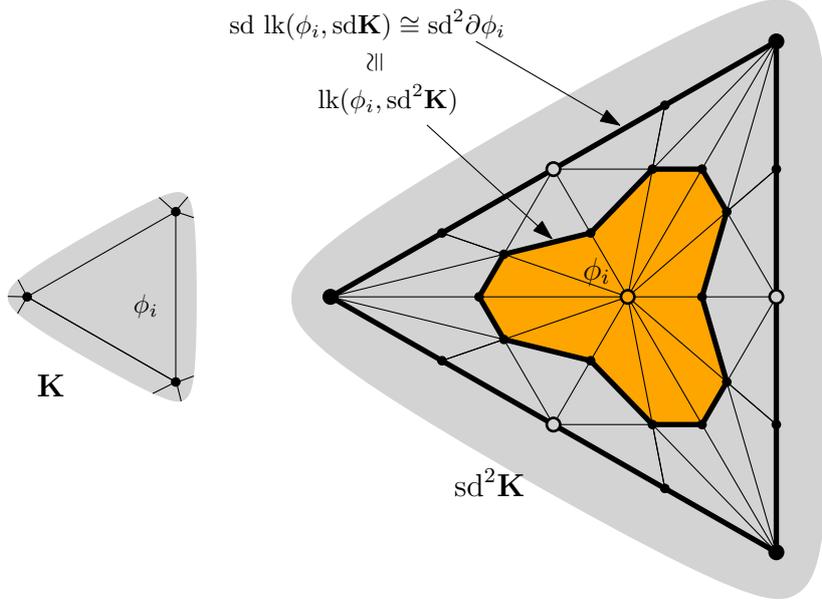}
\end{center}
\caption{Isomorphisms for verifying the link condition in case 1. We consider
  the case of the removal of the facet $\phi_i$. If we were
  checking star decomposability only, we would be interested in star
  decomposability of $\lk(\phi_i,\sd^2 \K)$. For star decomposability in
  vertices, this translates to checking the link condition for $\sd
  \lk(\phi_i,\sd \K)$ which is further isomorphic to $\sd^2 \partial \phi_i$
  (in this case, the last isomorphism is even equality).}
\label{f:case_phi}
\end{figure}
    
  \item $\omega_i = \sigma_j$ for some $j$, that is, $i > t$ and $t - i$ is
    odd:

    We need to describe $W$, for which we need to describe the faces $\eta$
    such that $\eta \subsetneq \sigma_j$ or $\sigma_j \subsetneq \eta$ such
    that $\eta \succ \sigma_j$. 
    As $\sigma_j$ induces an elementary collapse in a sequence
    of collapses of $\K'$, we get $\tau_j \succ \sigma_j$ but $\eta \prec
    \sigma_j$ for any $\eta \supsetneq \sigma_j$ different from $\tau_j$. On
    the other hand all proper subfaces of $\sigma_j$ are removed only later on
    in collapsing of $\K'$. Altogether $W = V(\sd \partial \sigma_j) \sqcup
    \{\tau_j \setminus \sigma_j\}$. See Figure~\ref{f:case_sigma} for an
    example of the pair~\eqref{e:pair_crazy} in this case.

\begin{figure}
\begin{center}
  \includegraphics[page=6]{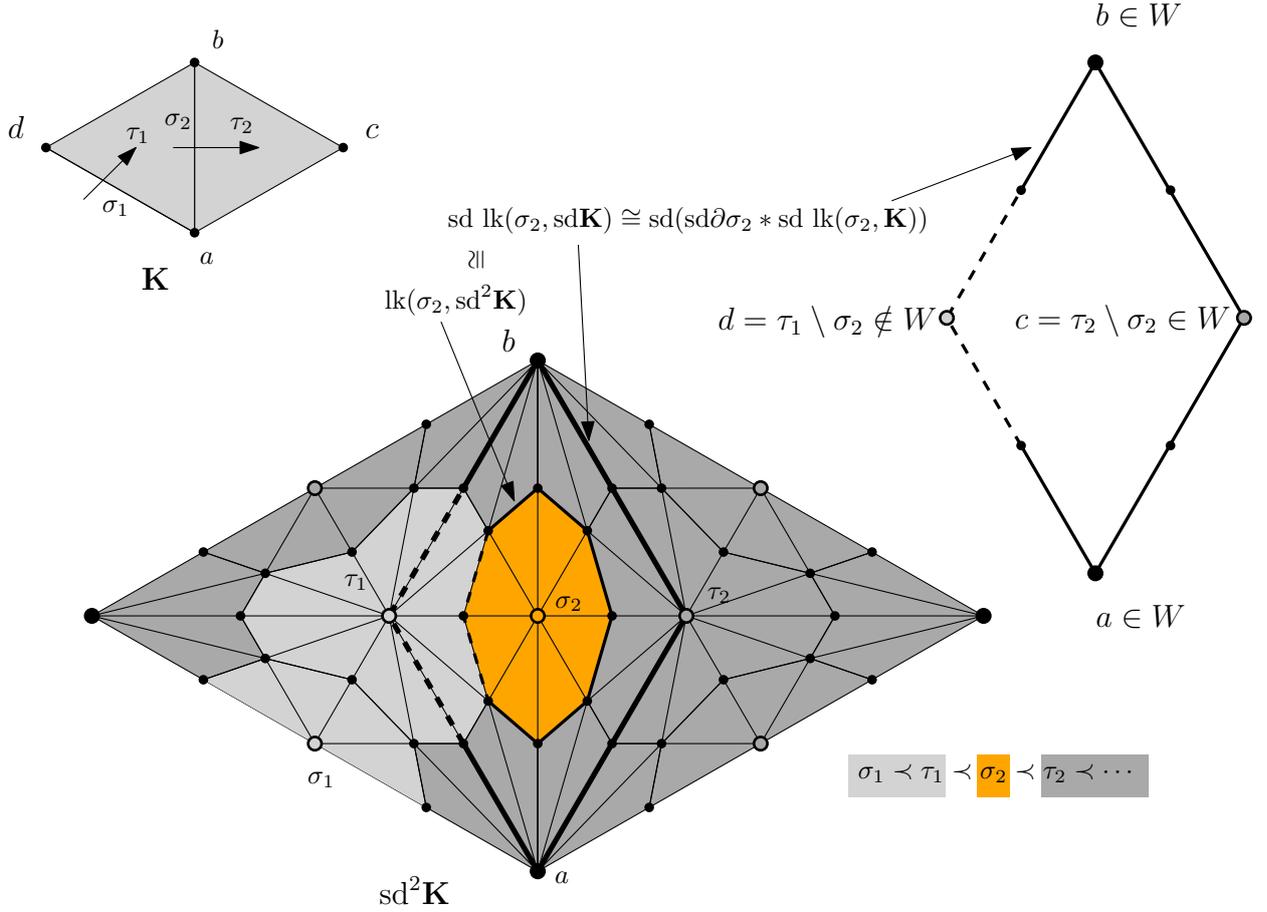}
\end{center}
  \caption{Isomorphisms for verifying the link condition in case 2. Here we
  consider the case $\sigma_j = \sigma_2$ coming from the collapses on the top
  left picture. The vertex decomposability of $(\sd
  \lk(\sigma_2, \sd \K), V(\lk(\sigma_2, \sd \K))_{\succ \sigma_2}) = (\sd
    \lk(\sigma_2, \sd \K), \{a,b, \tau_2\})$ in the  middle picture translates
    to vertex decomposability of the pair $(\sd(\sd \partial \sigma_2 * \sd
    \lk(\sigma_2, \K)), W)$ in the top right picture where $W = \{a,b,\tau_2
    \setminus \sigma_2\}$, which coincides with $V(\sd \partial \sigma_j)
    \sqcup \{\tau_j \setminus \sigma_j\}$ as required.
}
\label{f:case_sigma}
\end{figure}

    Now, we aim to use Corollary~\ref{c:join_pairs} with $$(\X, X) = (\sd
    \partial \sigma_j, V(\sd \partial \sigma_j))$$ and $$(\Y, Y) = (\sd
    \lk(\sigma_j, \K), \{\tau_j \setminus \sigma_j\}).$$
    The pair $(\sd \X, X)$ is star decomposable in vertices by
    Observation~\ref{o:boundary} and the induction. For checking star
    decomposability in vertices of $(\sd \Y, Y)$, we know that $\lk(\sigma_j,
    \K)$ satisfies the (HRC) condition. In particular, $\lk(\sigma_j,
        \K)$ is collapsible after removing some number of facets, and the
	subsequent collapses can be rearranged so that the vertex $\tau_j
	\setminus \sigma_j$ is the last vertex in the sequence of collapses.
	(If $\dim \lk(\sigma_j, \K) = 0$, then we instead rearrange the
	removals of the facets so that $\tau_j \setminus \sigma_j$ is the
	last.) Now, by induction, this sequence of removals of facets and
	collapses induces a star decomposition in vertices of $\sd \sd
	\lk(\sigma_j, \K)$ such that $\{\tau_j \setminus \sigma_j\}$ is the
	last vertex in this decomposition. This exactly means that $(\sd \Y,
	Y)$ is star decomposable in vertices.

  Altogether, Corollary~\ref{c:join_pairs} implies that the pair $(\sd(\X *
  \Y), X \sqcup Y)$ is star decomposable in vertices which is exactly the
  required pair~\eqref{e:pair_crazy}.
   
\item $\omega_i = \tau_j$ for some $j$, that is, $i > t$ and $t - i$ is
    even:
   
   We again first determine $W$. For each $\eta \supsetneq \tau_j$, we get
   $\eta \prec \tau_j$ as $\tau_j$ is a maximal face during the elementary
   collapse. On the other hand, for $\eta \subsetneq \tau_j$ we get $\eta \succ
   \tau_j$ unless $\eta = \sigma_j$ as all subfaces of $\tau_j$ have to be
   present at the moment of removing of $\sigma_j$, and $\tau_j$ immediately
   succeeds. Altogether, $W = (V(\partial \tau_j)\setminus \{\sigma_j\}) \sqcup
   \emptyset$. See Figure~\ref{f:case_tau} for an
    example of the pair~\eqref{e:pair_crazy} in this case.

\begin{figure}
\begin{center}
  \includegraphics[page=7]{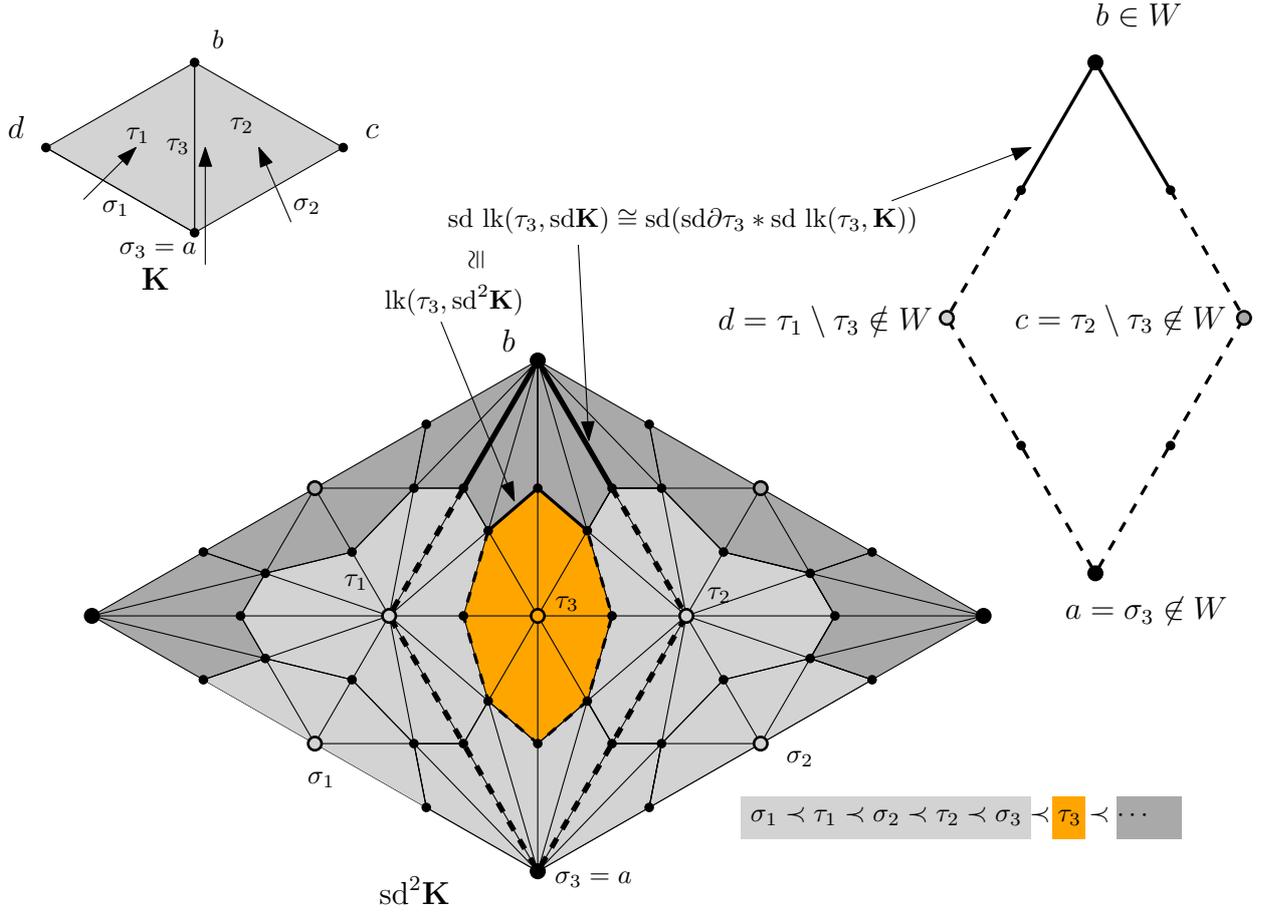}
\end{center}
  \caption{Isomorphisms for verifying the link condition in case 3. Here we
  consider the case $\tau_j = \tau_3$ coming from the collapses on the top
  left picture. The vertex decomposability of $(\sd
  \lk(\tau_3, \sd \K), V(\lk(\tau_3, \sd \K))_{\succ \tau_3}) = (\sd
    \lk(\tau_3, \sd \K), \{b\})$ in the middle picture translates
    to vertex decomposability of the pair $(\sd(\sd \partial \tau_3 * \sd
    \lk(\tau_3, \K)), W)$ in the top right picture where $W = \{b\}$, which
    coincides with $V(\sd \partial \tau_j) \setminus \{\sigma_j\}$ as required.
}
\label{f:case_tau}
\end{figure}

    We aim to use Proposition~\ref{p:join_nice} with $\X = \sd \lk (\tau_j,
    \K)$, $\Y = \sd \partial \tau_j$ and $Y = V(\sd \partial \tau_j)\setminus \{\sigma_j\}$. 
    We get that $\X$ is star decomposable in vertices by
    induction as $\lk (\tau_j, \K)$ satisfies the (HRC) condition. We also need
    that $(\sd \Y, Y)$ is star decomposable in vertices. For this we use
    Observation~\ref{o:boundary} and the induction while choosing $\sigma_j$
    to be the first face removed from $V(\sd \partial \tau_j)$. Then $Y = V(\sd
    \partial \tau_j)_{\succ' \{\sigma_j\}}$ where $\succ'$ is the corresponding
    order on $V(\sd \partial \tau_j)$. Altogether, for application of
    Proposition~\ref{p:join_nice} we choose the order $\succ'$ on $V(\sd \lk
    (\tau_j, \K)) \sqcup V(\sd \partial \tau_j)$ so that it starts with
    $\sigma_j$, it continues on $V(\sd \lk (\tau_j, \K)$ in order of a
    star decomposition in vertices of $\sd \X$ and finally it continues on
    $Y = V(\sd \partial \tau_j) \setminus \{\sigma_j\}$ in the already prescribed
    order $\succ'$. Then we get the required conclusion that $(\sd(\X * \Y),
    Y)$, which is the
    pair~\eqref{e:pair_crazy}, is star decomposable in vertices. This finishes the proof of
    Theorem~\ref{t:star_decomposable}. \hfill\qedsymbol
\end{enumerate}

\section*{Acknowledgments}
We thank to two anonymous referees for providing very useful comments.

\bibliographystyle{alpha}
\bibliography{vd}

\end{document}